\newtheorem{theorem}{Theorem}[section]
\newtheorem{remark}[theorem]{Remark}
\newtheorem{lemma}[theorem]{Lemma}
\newtheorem{proposition}[theorem]{Proposition}
\newtheorem{corollary}[theorem]{Corollary}
\newtheorem*{theorem*}{Theorem (Denjoy-Wolff)}
\newtheorem*{theoremA*}{Theorem A}
\newtheorem*{theoremB1*}{Theorem B1}
\newtheorem*{theoremB2*}{Theorem B2}
\newtheorem*{theoremC*}{Theorem C}
\renewcommand{\section}%
   {\setcounter{equation}{0}\@startsection {section}{1}{\z@}{-3.5ex plus -1ex
  minus -.2ex}{2.3ex plus .2ex}{\Large\bf}}
\numberwithin{equation}{section}
\newcommand{\C}{\mathbb{C}} 
\title{Stability of derivations under weak-2-local continuous perturbations}
\date{\today}
\author[E. Jord\'a]{Enrique Jord\'a}
\address{Escuela Polit\'ecnica Superior de Alcoy, IUMPA, Universitat Polit\'ecnica de Valencia, Plaza Ferr\'andiz y Carbonell 1, 03801 Alcoy, Spain}
\email{ejorda@mat.upv.es}
\author[A.M. Peralta]{Antonio M. Peralta}
\address{Departamento de An{\'a}lisis Matem{\'a}tico, Universidad de Granada,
Facultad de Ciencias 18071, Granada, Spain}
\email{aperalta@ugr.es}
\thanks{First author partially supported by the Spanish Ministry of Economy and Competitiveness Project MTM2013-43540-P. Second author partially supported by the Spanish Ministry of Economy and Competitiveness and European Regional Development Fund project no. MTM2014-58984-P and Junta de Andaluc\'{\i}a grant FQM375.}
\keywords{derivation; 2-local linear map; 2-local symmetric maps; 2-local $^*$-derivation; 2-local derivation; weak-2-local derivation}
\subjclass[2010]{47B49, 46L05, 46L40, 46T20, 47L99}
\begin{document}
\begin{abstract} Let $\Omega$ be a compact Hausdorff space and let $A$ be a C$^*$-algebra. We prove that if every weak-2-local derivation on $A$ is a linear derivation and every derivation on $C(\Omega,A)$ is inner, then every weak-2-local derivation $\Delta:C(\Omega,A)\to C(\Omega,A)$ is a {\rm(}linear{\rm)} derivation. As a consequence we derive that, for every complex Hilbert space $H$, every weak-2-local derivation $\Delta : C(\Omega,B(H)) \to  C(\Omega,B(H))$ is a (linear) derivation. We actually show that the same conclusion remains true when $B(H)$ is replaced with an atomic von Neumann algebra. With a modified technique we prove that, if $B$ denotes a compact C$^*$-algebra (in particular, when $B=K(H)$), then every weak-2-local derivation on $C(\Omega,B)$ is a (linear) derivation. Among the consequences, we show that for each von Neumann algebra $M$ and every compact Hausdorff space $\Omega$, every 2-local derivation on $C(\Omega,M)$ is a {\rm(}linear{\rm)} derivation.
\end{abstract}

\maketitle

\section{Introduction}

We recall that a derivation from a Banach algebra $A$ into a Banach $A$-bimodule $X$ is a linear map $D: A\to X$ satisfying \begin{equation}\label{eq Leibniz rule} D(a b) = D(a) b + a D(b),
\end{equation} for every $a, b$ in $A$. Given $x_0\in X$, the operator $\hbox{ad}_{x_0}  : A \to X$, $a\mapsto \hbox{ad}_{x_0} (a) = [x_0,a ] = x_0 a - a x_0,$ is a derivation. Derivations of this form are termed \emph{inner derivations}. A linear mapping $T: A\to X$ is called a \emph{local derivation} if for each $a\in A$ there exits a derivation $D_{a}: A\to X$, depending on $a$, satisfying $T(a) = D_a(a)$.\smallskip

In the setting above, the dual space $X^*$ can be equipped with a natural structure of Banach $A$-bimodule with respect to the products $$ (a \phi) (x) = \phi (x a),\hbox{ and, } (\phi a) (x) = \phi (ax) \ \ (a\in A, x\in X, \phi\in X^*).$$

Derivations whose domain is a C$^*$-algebra or a von Neumann algebra are, by far, the most studied and best understood class of derivations. S. Sakai sets some of the most influencing results by showing that every derivation on a C$^*$-algebra is automatically continuous (see \cite{Sak60} or \cite[Lemma 1.4.3]{S}), and every derivation on a von Neumann algebra or on a unital simple C$^*$-algebra is inner (cf. \cite[Theorems 4.1.6 and 4.1.11]{S}). There are examples of derivations on a C$^*$-algebra which are not inner (see \cite[Example 1.4.8]{S}). Subsequently, J. Ringrose established in  \cite{Ringrose72} that every derivation from a C$^*$-algebra $A$ into a Banach $A$-bimodule is continuous. A derivation $D$ on a C$^*$-algebra $A$ is called a \emph{$^*$-derivation} if $D(a^*) = D(a)^*$ for all $a\in A$. An inner derivation $\hbox{ad}_{x_0}: A\to A$ is a $^*$-derivation if and only if $x_0^* = -x_0$. Let $\Delta$ be a mapping from a C$^*$-algebra $A$ into a C$^*$-algebra $B$. $\Delta^{\sharp} : A\to B$ is the map defined by  $\Delta^{\sharp} (a) =\Delta(a^*)^*$ ($a\in A$). $\Delta$ is called \emph{symmetric} if $\Delta^{\sharp} = \Delta$, i.e.,  $\Delta (a^* ) = \Delta (a)^*$, for every $a\in A$. \smallskip

Researchers belonging to different generations have been striving to explore the stability of the set of derivations under weaker and weaker hypothesis since 1990. The pioneering contribution of R.V. Kadison in \cite{Kad90} asserts that every continuous local derivation from a von Neumann algebra $M$ into a dual Banach $M$-bimodule $X$ is a derivation.
B.E. Johnson shows in \cite{John01} that the conclusion in Kadison's theorem holds for local derivations from a C$^*$-algebra $A$ into a Banach $A$-bimodule. These results can be interpreted as properties of stability of derivations under local perturbations in the set of linear maps from $A$ into $X$.\smallskip

A weaker stability property derives from the notion of 2-local derivation. Let $X$ be a Banach $A$-bimodule over a Banach algebra $A$. Following P. \v{S}emrl \cite{Semrl97}, a mapping $\Delta : A\to X$ is said to be a \emph{2-local derivation} if for every $a,b\in A$, there exists a derivation $D_{a,b}: A\to X$, depending on $a$ and $b$, satisfying $\Delta(a) = D_{a,b}(a)$ and $\Delta(b) = D_{a,b}(b)$. In a recent contribution, Sh. Ayupov and K. Kudaybergenov prove that every 2-local derivation on a von Neumann algebra $M$ is a derivation, that is, derivations are stable under 2-local perturbations in the set of mappings on $M$ (see \cite{AyuKuday2014}). As long as we know the case of 2-local derivations on a  general C$^*$-algebra remains as an open problem.\smallskip

More recent studies are leading the mathematical community to the notion of weak-2-local $\mathcal{S}$-maps between Banach spaces (see \cite{NiPe2014,NiPe2015,CaPe2015,CaPe2016}). According to the notation in the just quoted references, given a subset $\mathcal{S}$ of the space $L(X,Y)$, of all linear maps between Banach spaces $X$ and $Y$, a (non-necessarily linear nor continuous) mapping $\Delta : X\to Y$ is said to be a \emph{weak-2-local $\mathcal{S}$ map} (respectively, a \emph{2-local $\mathcal{S}$-map}) if for every $x,y\in X$ and $\phi\in Y^{*}$ (respectively, for every $x,y\in X$), there exists $T_{x,y,\phi}\in \mathcal{S}$, depending on $x$, $y$ and $\phi$ (respectively, $T_{x,y}\in \mathcal{S}$, depending on $x$ and $y$), satisfying $$\phi \Delta(x) = \phi T_{x,y,\phi}(x), \hbox{ and  }\phi \Delta(y) = \phi T_{x,y,\phi}(y)$$ (respectively, $ \Delta(x) =  T_{x,y}(x),$ and $  \Delta(y) = T_{x,y}(y)$). If we take $\mathcal{S}=K(X,Y)$ the space of compact linear mappings from $X$ to $Y$ then it is straightforward to check that if $\Delta$ is any non-linear, 1-homogeneous map, i.e. $\Delta(\alpha x)=\alpha \Delta(x)$ for each $\alpha\in\C$, then $\Delta$ is a 2-local $\mathcal{S}$-map. Some particular cases receive special names. When $\mathcal{S}$ is the set of all derivations on a C$^*$-algebra $A$ (respectively, the set of all $^*$-derivations on $A$, or, more generally, the set of all symmetric maps from $A$ into a C$^*$-algebra $B$), weak-2-local $\mathcal{S}$-maps are called \emph{weak-2-local derivations} (respectively, \emph{weak-2-local $^*$-derivations} or \emph{weak-2-local symmetric maps}). \smallskip

The first results in this line prove that every weak-2-local derivation on a finite dimensional C$^*$-algebra is a linear derivation (see \cite[Corollary 2.13]{NiPe2015}), and for a separable complex Hilbert space $H$ every {\rm(}non-necessarily linear nor continuous{\rm)} weak-2-local $^*$-derivation on $B(H)$ is linear and a $^*$-derivation \cite[Theorem 3.10]{NiPe2015}. Symmetric maps between C$^*$-algebras $A$ and $B$ are very stable under weak-2-local perturbations in the space $L(A,B)$, more concretely, every weak-2-local symmetric map between C$^*$-algebras is a linear map \cite[Theorem 2.5]{CaPe2015}. Fruitful consequences are derived from this result, for example, every weak-2-local $^*$-derivation on a general C$^*$-algebra is a (linear) $^*$-derivation, and  every 2-local $^*$-homomorphism between C$^*$-algebras is a (linear) $^*$-homomorphism (see \cite[Corollary 2.6]{CaPe2015}). Weak-2-local derivations on von Neumann algebras and on general C$^*$-algebras remain unknowable. In \cite{CaPe2016}, J.C. Cabello and the second author of this note prove that every weak-2-local derivation on $B(H)$ or on $K(H)$ is a linear derivation, where $H$ is an arbitrary complex Hilbert space, and consequently, every weak-2-local derivation on an atomic von Neumann algebra or on a compact C$^*$-algebra is a linear derivation.\smallskip

Another attempt to study 2-local derivations on C$^*$-algebras has been conducted by Sh. Ayupov and F.N. Arzikulov \cite{AyuArz}. The main result in the just quoted paper proves that, for every compact Hausdorff space $\Omega$, every 2-local derivation on $C(\Omega,B(H))$ is a derivation.\smallskip

In this note we continue with the study of weak-2-local derivations in new classes of C$^*$-algebras. In the first main result (Corollary \ref {c weak-2-local der on finite dimensional and atomic}) we prove that, for every Hilbert space $H$, every weak-2-local $\Delta:C(\Omega,B(H))\to C(\Omega,B(H))$ is a {\rm(}linear{\rm)} derivation. This result is a consequence of a technical theorem in which we establish that if $A$ is a C$^*$-algebra such that every weak-2-local derivation on $A$ is a linear derivation and every derivation on $C(\Omega,A)$ is inner, then every weak-2-local derivation $\Delta:C(\Omega,A)\to C(\Omega,A)$ is a {\rm(}linear{\rm)} derivation (see Theorem \ref{t weak-2-local der on CKA}). Actually this technical result, combined with recent results on weak-2-local derivations on atomic von Neumann algebras in \cite{CaPe2016}, implies that every weak-2-local $C(\Omega,A)$ is a linear derivation whenever $A$ is an atomic von Neumann algebra. We particularly show that the result of Sh. Ayupov and F.N. Arzikulov remains true if we replace ``2-local'' with ``weak-2-local''.\smallskip

In order to extend our study to weak-2-local derivations on C$^*$-algebras of the form $C(\Omega,K(H))$, where $K(H)$ is the C$^*$-algebra of compact linear operators on a Hilbert space $H$, or to $C(\Omega,B)$ where $B$ is a compact C$^*$-algebra, we represent, in Proposition \ref{p derivations on C(K,K(H))}, every derivation on $C(\Omega, K(H))$ as an ``inner derivations'' associated with a mapping $Z_0 : \Omega \to B(H)$ which is, in general, $\tau$-weak$^*$-continuous, where $\tau$ is the topology of $\Omega$. The $\tau$-norm continuity of the mapping $Z_0$ cannot be, in general, pursued (compare Remark \ref{new remark}). The paper culminates with a result asserting that, for a compact C$^*$-algebra $B$, every weak-2-local derivation $\Delta:C(\Omega,B)\to C(\Omega,B)$ is a {\rm(}linear{\rm)} derivation (see Theorem \ref{t weak-2-local der on compact Cstar}).

\section{Weak-2-local derivations on $C(\Omega)\otimes A$}

Let $A$ and $B$ be C$^*$-algebras. Henceforth $A\odot B$ will denote the algebraic tensor product of $A$ and $B$. A norm $\alpha$ on $A\odot B$ is said to be a C$^*$-norm if $\alpha (x x^*) = \alpha (x)$ and $\alpha (x y) \leq \alpha (x) \alpha (y)$, for every $x,y\in A\odot B$. It is known that there exists a least C$^*$-norm $\alpha_0$ among all C$^*$-norms $\alpha$ on $A \odot B$ such that $\alpha^*$ is finite. It is further known that $\alpha_0$ is a cross norm and $\lambda \leq \alpha_0 \leq \gamma$, where $\lambda$ and $\varepsilon$ denote the injective and the projective tensor norm on $A\odot B$, respectively (see \cite[Propsition 1.22.2]{S}). Along this note, the symbol $A\otimes B$ will denote the C$^*$-algebra obtained as the completion of $A\odot B$ with respect to $\alpha_0$.\smallskip

For each C$^*$-algebra $A$ and every locally compact Hausdorff space $L$, the Banach space $C_0(L,A)$, of all $A$-valued continuous functions on $L$ vanishing at infinite, admits a natural structure of C$^*$-algebra with respect to the ``pointwise'' operations and the $\sup$ norm. It is also known that $C_0 (L,A)$ and $C_0(L)\otimes A = C_0(L)\otimes_{\lambda} A$ are isometrically C$^*$-isomorphic as C$^*$-algebras (see \cite[Proposition 1.22.3]{S}). When $\Omega$ is a compact Hausdorff space, we have $C (\Omega,A)\cong C(\Omega)\otimes A = C(\Omega)\otimes_{\lambda} A$.\smallskip

Another influential result due to S. Sakai, apart from those commented in the introduction, asserts that every von Neumann algebra admits a unique (isometric) predual and its product is separately weak$^*$-continuous (see \cite[Theorem 1.7.8]{S}). It is known that, for a C$^*$-algebra $A$, its second dual, $A^{**}$, is a von Neumann algebra \cite[Theorem 1.17.2]{S}. Combining these facts with the identity in \eqref{eq Leibniz rule}, we can see that for every derivation $D: A\to A$, its bitransposed map $D^{**}: A^{**} \to A^{**}$ is a derivation on $A^{**}$. Therefore, there exists $z_0$ in $A^{**}$ satisfying $D^{**} (x) = [z_0, x]$, for every $x\in A^{**}$. We have already commented that we cannot assume that $z_0$ lies in $A$. The following lemma, which was originally proved by R.V. Kadison in \cite[Theorem 2]{Kad66}, can be also derived from the above results:

\begin{lemma}\label{l derivaiton annihilates the center}\cite[Theorem 2]{Kad66} Let $D: A\to A$ be a derivation on a C$^*$-algebra $A$ whose center is denoted by $Z(A)$. Then $D (c) =0$, for every $c\in Z(A)$.$\hfill\Box$
\end{lemma}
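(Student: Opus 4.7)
The plan is to exploit the extension to the bidual $A^{**}$ that is explicitly foreshadowed in the excerpt, combined with Sakai's theorem that every derivation on a von Neumann algebra is inner. First I would consider the bitransposed map $D^{**}: A^{**} \to A^{**}$. By the discussion preceding the lemma, $D^{**}$ is a derivation on the von Neumann algebra $A^{**}$, so there exists $z_0 \in A^{**}$ with $D^{**}(x) = [z_0, x]$ for every $x \in A^{**}$. In particular, $D(c) = D^{**}(c) = [z_0, c]$ for every $c \in A$.

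Next I would upgrade centrality from $A$ to $A^{**}$: I claim $Z(A) \subseteq Z(A^{**})$. To see this, fix $c \in Z(A)$ and any $x \in A^{**}$. Since $A$ is weak$^*$-dense in its bidual, choose a net $(a_\lambda) \subseteq A$ with $a_\lambda \to x$ in the weak$^*$-topology. By the separate weak$^*$-continuity of the product in the von Neumann algebra $A^{**}$ (Sakai), we have $c a_\lambda \to c x$ and $a_\lambda c \to x c$ weak$^*$, while the relation $c a_\lambda = a_\lambda c$ holds for every $\lambda$ by centrality of $c$ in $A$. Passing to limits gives $c x = x c$, so $c \in Z(A^{**})$.

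Combining the two steps, $[z_0, c] = 0$ for every $c \in Z(A)$, whence $D(c) = D^{**}(c) = 0$, which is the desired conclusion. There is essentially no obstacle here: the argument is a two-line consequence of the inner-derivation theorem on $A^{**}$ together with the transfer of centrality. The only mildly delicate point is the invocation of separate weak$^*$-continuity of multiplication, but this is recorded in the excerpt as Sakai's predual theorem \cite[Theorem 1.7.8]{S}, so it may be cited without further comment.
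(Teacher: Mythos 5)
Your argument is correct and is precisely the derivation the paper has in mind: the text immediately preceding the lemma notes that $D^{**}$ is an inner derivation $[z_0,\cdot]$ on the von Neumann algebra $A^{**}$, and the only remaining point, $Z(A)\subseteq Z(A^{**})$, follows exactly as you say from weak$^*$-density of $A$ in $A^{**}$ and separate weak$^*$-continuity of the product. No gaps; this matches the paper's intended proof.
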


A Banach algebra $A$ is said to be \emph{super-amenable} if every derivation from $A$ into a Banach $A$-bimodule is inner. The Banach algebra $A$ is called \emph{amenable} if for every Banach $A$-bimodule $X$, every derivation from $A$ into $X^*$ is inner. Finally if every derivation from $A$ into $A^*$ is inner we say that $A$ is weakly amenable. Every super-amenable (respectively, amenable) Banach algebra is amenable (respectively, weakly amenable), and the three classes are mutually different. We refer to the monographs \cite{John72,Run} for a detailed account on the theory of super-amenable, amenable and weakly amenable Banach algebras.\smallskip

The problem of determining those C$^*$-algebras admitting only inner derivations has been considered by a wide number of researchers. Although there are some basic unanswered questions along these lines, a partial result obtained by C.A. Akemann and B.E. Johnson (see \cite{AkJohn79}) will be very useful for our purposes.

\begin{theorem}\label{l C(KvN) has the inner derivation property or rigidity property}\cite[Theorem 2.3]{AkJohn79} Let $M$ be a von Neumann algebra, and let $B$ be a unital abelian C$^*$-algebra. Then every derivation of the C$^*$-tensor product $B \otimes M$ is inner. Equivalently, given a compact Hausdorff space $\Omega$, every derivation on  $C(\Omega,M)$ is inner.$\hfill\Box$
\end{theorem}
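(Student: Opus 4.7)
Let $D\colon C(\Omega,M)\to C(\Omega,M)$ be a derivation; the goal is to produce $z\in C(\Omega,M)$ with $D=\hbox{ad}_{z}$. First I would reduce the problem to a $C(\Omega)$-linear one. By the Sakai-Ringrose automatic continuity theorem $D$ is norm-continuous, and the abelian C$^*$-subalgebra $C(\Omega)\otimes 1_{M}$ sits inside the centre $Z(C(\Omega,M))$. Lemma \ref{l derivaiton annihilates the center} therefore forces $D(f\otimes 1_{M})=0$ for every $f\in C(\Omega)$, whence the Leibniz rule yields $D(f\,a)=f\,D(a)$ for all $f\in C(\Omega)$ and $a\in C(\Omega,M)$. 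In particular $D$ is a $C(\Omega)$-module map, so it is already determined by its restriction to the constant copy $1\otimes M \subset C(\Omega,M)$.

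Next I would analyse the problem pointwise. For each $t\in\Omega$ the evaluation $\varepsilon_{t}\colon C(\Omega,M)\to M$, $a\mapsto a(t)$, is a surjective $^{*}$-homomorphism, and hence $\delta_{t}:=\varepsilon_{t}\circ D|_{1\otimes M}\colon M\to M$ is a derivation of the von Neumann algebra $M$. Sakai's theorem supplies an element $z(t)\in M$ with $\delta_{t}=\hbox{ad}_{z(t)}$, unique only modulo the centre $Z(M)$. If one could pick the lifts $t\mapsto z(t)$ into a norm-continuous function $z\in C(\Omega,M)$, then the $C(\Omega)$-linearity already obtained, combined with the density of the algebraic tensor product $C(\Omega)\odot M$ in $C(\Omega,M)$ and the continuity of $D$, would immediately promote the identity $D(a)(t)=[z(t),a]$ from $a\in 1\otimes M$ to all of $C(\Omega,M)$, giving $D=\hbox{ad}_{z}$.

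The main obstacle is precisely this continuous selection: one has to neutralise the $Z(M)$-ambiguity pointwise in a globally coherent way. My preferred route is to pass to the bidual. The bitransposed map $D^{**}$ is a derivation on the von Neumann algebra $C(\Omega,M)^{**}$, so Sakai's theorem yields $Z\in C(\Omega,M)^{**}$ with $D^{**}=\hbox{ad}_{Z}$. Because $C(\Omega,M)$ is unital its multiplier algebra coincides with itself, so it suffices to adjust $Z$ by an appropriate element of the centre of $C(\Omega,M)^{**}$ so that $\hbox{ad}_{Z}$ preserves $C(\Omega,M)$ on both sides, forcing $Z\in C(\Omega,M)$. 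This centre-correction manoeuvre is the technically delicate part of the argument and genuinely uses that $M$ is a \emph{von Neumann} algebra rather than a general C$^*$-algebra (this is exactly where the hypothesis is essential, and its failure for general $M$ explains why the inner-derivation property is not known without it). Once $z\in C(\Omega,M)$ has been extracted, the verification that $D=\hbox{ad}_{z}$ reduces to a check on elementary tensors $f\otimes a$, which is handled by Step 1 and the pointwise identity from Step 2.
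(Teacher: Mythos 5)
The paper does not prove this statement: it is quoted verbatim from Akemann--Johnson \cite[Theorem 2.3]{AkJohn79} and used as a black box, so your attempt has to stand on its own. Your first two reductions are sound: automatic continuity, the fact that Lemma \ref{l derivaiton annihilates the center} kills $C(\Omega)\otimes 1_M$ and makes $D$ a $C(\Omega)$-module map, and the observation that each $t\mapsto D(\widehat{a})(t)$ yields a derivation of $M$, inner by Sakai. You also correctly locate the difficulty in making a globally coherent choice of the implementing elements $z(t)$. But your third paragraph does not close that gap; it restates it and then asserts it away. Two concrete objections. (i) With $Z\in C(\Omega,M)^{**}$ and $D^{**}=\hbox{ad}_Z$, the map $\hbox{ad}_Z$ \emph{already} preserves $C(\Omega,M)$ on both sides, because $D^{**}$ restricts to $D$; so the condition you propose to arrange by a central correction is automatic and carries no information about $Z$. (ii) The inference ``$\hbox{ad}_Z$ preserves $A$, and $A$ is unital so $M(A)=A$, hence some central translate of $Z$ lies in $A$'' is invalid: $[Z,a]\in A$ for all $a\in A$ does not imply $Za\in A$, so $Z$ need not be a multiplier at all, and unitality is beside the point. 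If this step were legitimate it would prove that every derivation of every unital C$^*$-algebra is inner, which is false (the paper recalls \cite[Example 1.4.8]{S}; equivalently, the outer derivation of $C(\Omega,K(H))$ constructed in Remark \ref{new remark} extends to an outer derivation of the unitization). Note also that your argument never actually invokes that $M$ is a von Neumann algebra at the point where you declare the hypothesis essential.

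What the theorem genuinely requires is a \emph{canonical, norm-controlled} choice of implementing element whose dependence on $t$ can be shown to be continuous --- for instance the Kadison--Lance--Ringrose normalization $\|a_0 q\|=\frac12\|\widetilde{D}|_{A^{**}q}\|$, or the spectral normalization $\sigma(-iZ_0)\subseteq\mathbb{R}_0^{+}$ with $\|Z_0\|=\|D\|$, together with Hausdorff-distance estimates on spectra to get continuity in $t$. This is precisely what the present paper carries out by hand for $C(\Omega,M_n)$ in Proposition \ref{p Stampfli on CKMn}, and it is the substance of Akemann and Johnson's proof in the general von Neumann algebra case. As written, your proposal is a correct reduction of the theorem to its hardest step, not a proof of it.
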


Let $X$ and $Y$ be Banach $A$-bimodules over a Banach algebra $A$. A linear mapping $\Phi: X\to Y$ is called a \emph{module homomorphism} if $\Phi (a x) = a \Phi (x)$ and $\Phi (x a) = \Phi (x) a$, for every $a\in A$, $x\in X.$ The next lemma, whose proof is left to the reader, gathers some basic properties.

\begin{lemma}\label{l composition module hom} Let $A$ be a Banach algebras and let $\Phi: X \to Y$ be a module homomorphism. \begin{enumerate}[$(a)$]
\item If $D: A\to X$ is a derivation, then $\Phi D : A \to Y$ is a derivation;
\item If $\Delta: A\to X$ is a weak-2-local derivation and $\Phi$ is continuous, then $\Phi \Delta : A \to Y$ is a weak-2-local derivation;
\item Suppose $B$ is another Banach algebra such that $X$ is a Banach $B$-bimodule and there is a homomorphism $\Psi : B\to A$ satisfying $\Psi (b) x = b x$ and $x b = x \Psi (b)$, for every $x\in X$, $b\in B$. Then for each derivation {\rm(}respectively, for each weak-2-local derivation{\rm)} $D: A\to X$ the composition $D \Psi: B \to X$ is a derivation {\rm(}respectively, a weak-2-local derivation{\rm)}.$\hfill\Box$
\end{enumerate}
\end{lemma}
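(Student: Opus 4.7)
The plan is to verify each of the three parts by unwinding definitions; no deep tool is required, but it is worth being explicit about which identity gets used where. The main ingredients are the module-homomorphism identities for $\Phi$, the Leibniz rule for a derivation, and, for (b), the standard trick of pulling a functional $\phi\in Y^*$ back through a continuous linear map.

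For (a), I would write out the Leibniz rule for $D$ at $a,b\in A$, apply $\Phi$, and then use $\Phi(ax)=a\Phi(x)$ and $\Phi(xa)=\Phi(x)a$ to pull the algebra elements outside; linearity of $\Phi D$ is immediate from linearity of each factor.

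For (b), given $a,b\in A$ and $\phi\in Y^*$, continuity of $\Phi$ ensures $\phi\circ\Phi\in X^*$. Applying the weak-2-local derivation property of $\Delta$ to the triple $(a,b,\phi\circ\Phi)$ yields a derivation $D_{a,b,\phi\circ\Phi}\colon A\to X$ agreeing with $\Delta$ on $\{a,b\}$ when paired with $\phi\circ\Phi$; by part (a), $\Phi\circ D_{a,b,\phi\circ\Phi}\colon A\to Y$ is a derivation, and it agrees with $\Phi\Delta$ on $\{a,b\}$ when paired with $\phi$, which is exactly the witness required.

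For (c), the derivation case follows by applying $D$ to $\Psi(b_1 b_2)=\Psi(b_1)\Psi(b_2)$, using the Leibniz rule for $D$ at $\Psi(b_1),\Psi(b_2)$, and then converting the $A$-actions to $B$-actions via $\Psi(b)x=bx$ and $x\Psi(b)=xb$. For the weak-2-local case, given $b_1,b_2\in B$ and $\phi\in X^*$, the weak-2-local hypothesis on $D$ at $(\Psi(b_1),\Psi(b_2),\phi)$ produces a derivation $E\colon A\to X$; then $E\Psi\colon B\to X$ is a derivation by the first case of (c) just proved, and it agrees with $D\Psi$ on $\{b_1,b_2\}$ when paired with $\phi$, supplying the required witness. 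I do not anticipate any genuine obstacle, since each item is a definitional check; the only subtle point is that continuity of $\Phi$ in (b) is indispensable, as it is precisely what keeps $\phi\circ\Phi$ inside $X^*$ and so allows the weak-2-local hypothesis on $\Delta$ to be invoked.
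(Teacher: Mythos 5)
Your proof is correct and is exactly the routine definitional verification the paper has in mind (the paper explicitly leaves this proof to the reader). All three parts check out, including the key observation in (b) that continuity of $\Phi$ is what places $\phi\circ\Phi$ in $X^*$, and the use of part (a) (respectively, the derivation case of (c)) to certify that the composed witness is itself a derivation.
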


Following standard notation, given $t\in \Omega$, $\delta_{t} : C(\Omega,A) \to A$ will denote the $^*$-homomorphism defined by $\delta_t (X) = X(t)$. The space $C(\Omega,A)$ also is a Banach $A$-bimodule with products $(a X) (t) = a X(t)$ and $(X a) (t) = X(t) a$, for every $a\in A$, $X\in C(\Omega,A)$. The mapping $\delta_{t} : C(\Omega,A) \to A$ is an $A$-module homomorphism.\smallskip

Given a compact Hausdorff space $\Omega$ and a C$^*$-algebra $A$, the $^*$-homomorphism mapping each element $a$ in $A$ to the constant function $\Omega\to A$, $t\mapsto a$ will be denoted by $\widehat{a}$. The mapping $\Gamma: A \to C(\Omega, A)= C(\Omega) \otimes A$, $\Gamma (a)= 1\otimes a = \widehat{a}$,  is an $A$-module homomorphism.

\begin{theorem}\label{t weak-2-local der on CKA} Let $\Omega$ be a compact Hausdorff space and let $A$ be a C$^*$-algebra. Suppose that every weak-2-local derivation on $A$ is a linear derivation, every derivation on $C(\Omega,A)$ is inner. Then every weak-2-local derivation $\Delta:C(\Omega,A)\to C(\Omega,A)$ is a {\rm(}linear{\rm)} derivation.
\end{theorem}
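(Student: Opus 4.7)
The plan is to realize $\Delta$ pointwise in the form $\Delta(X)(t) = d_t(X(t))$ for a family of linear derivations $d_t:A\to A$ extracted from $\Delta$, after which both linearity and the Leibniz rule for $\Delta$ follow by pointwise inheritance.

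First, for each $t\in \Omega$, I would view $A$ as a $C(\Omega,A)$-bimodule via the evaluation homomorphism $\delta_t$; then $\delta_t$ is a continuous $C(\Omega,A)$-module homomorphism, and $\Gamma:A\to C(\Omega,A)$, $a\mapsto \widehat{a}$, is a homomorphism satisfying $\Gamma(b)\cdot x = bx$ and $x\cdot \Gamma(b) = xb$ for $b,x\in A$ with respect to this action. Parts $(b)$ and $(c)$ of Lemma \ref{l composition module hom} then yield that
\[
d_t := \delta_t\circ \Delta\circ \Gamma : A\to A,\qquad d_t(a) = \Delta(\widehat{a})(t),
\]
is a weak-2-local derivation on $A$, hence a linear derivation by the first hypothesis.

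The key step is the pointwise identity $\Delta(X)(t) = d_t(X(t))$ for all $X\in C(\Omega,A)$ and $t\in\Omega$. Fix such $X$, $t$ and an arbitrary $\phi_0\in A^*$, and set $\phi := \phi_0\circ \delta_t\in C(\Omega,A)^*$. Applying the weak-2-local property of $\Delta$ to the pair $(X,\widehat{X(t)})$ and this $\phi$ produces a derivation $D_{X,\widehat{X(t)},\phi}$ on $C(\Omega,A)$ that agrees with $\Delta$ against $\phi$ on both $X$ and $\widehat{X(t)}$. The second hypothesis forces $D_{X,\widehat{X(t)},\phi}$ to be inner, say $D_{X,\widehat{X(t)},\phi}=[Z,\,\cdot\,]$ with $Z\in C(\Omega,A)$, so evaluating at $t$ and using $\widehat{X(t)}(t)=X(t)$ gives
\[
\phi_0(\Delta(X)(t)) = \phi_0([Z(t),X(t)]) = \phi_0(\Delta(\widehat{X(t)})(t)) = \phi_0(d_t(X(t))).
\]
Since $\phi_0\in A^*$ was arbitrary, $\Delta(X)(t) = d_t(X(t))$. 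With this identification, for $X,Y\in C(\Omega,A)$ and $\alpha,\beta\in\C$,
\[
\Delta(\alpha X+\beta Y)(t) = d_t(\alpha X(t)+\beta Y(t)) = \alpha d_t(X(t)) + \beta d_t(Y(t)) = \bigl(\alpha\Delta(X)+\beta\Delta(Y)\bigr)(t),
\]
and analogously $\Delta(XY)(t) = d_t(X(t))Y(t) + X(t)d_t(Y(t)) = \bigl(\Delta(X)Y+X\Delta(Y)\bigr)(t)$, so $\Delta$ is a linear derivation.

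The main obstacle is the pointwise identification step, which is exactly the point at which both hypotheses are used in tandem. The innerness hypothesis on $C(\Omega,A)$ collapses each locally chosen derivation $D_{X,\widehat{X(t)},\phi}$ to a single commutator $[Z(t),\,\cdot\,]$ on $A$, while the test functional $\phi_0\circ\delta_t$ extracts only the value at $t$; the choice of $\widehat{X(t)}$ as the second argument then forces agreement with $d_t(X(t))$, whose good behavior is in turn guaranteed by the first hypothesis. Without innerness, the locally chosen derivation could a priori mix information across different points of $\Omega$ and spoil the pointwise reconstruction.
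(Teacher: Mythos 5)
Your proposal is correct and follows essentially the same route as the paper: the same factorization $d_t=\delta_t\Delta\Gamma$ justified by Lemma~\ref{l composition module hom}, and the same use of the innerness hypothesis together with the test functionals $\phi_0\circ\delta_t$ to establish the pointwise identity $\Delta(X)(t)=d_t(X(t))$, which is exactly the paper's identity \eqref{eq compositon delta t}. The only difference is at the very end: the paper first reduces the problem to proving linearity by citing \cite[Theorem 3.4]{EssaPeRa14}, whereas you read off both linearity and the Leibniz rule directly from the pointwise representation, which makes your conclusion slightly more self-contained.
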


\begin{proof} By \cite[Theorem 3.4]{EssaPeRa14} it is enough to prove that $\Delta$ is linear. This linearity certainly holds if and only if $\delta_t \Delta$ is linear for all $t\in \Omega$. 
\smallskip

Fix an arbitrary $t\in \Omega$. We claim that \begin{equation}\label{eq compositon delta t} \delta_t \Delta \Gamma \delta_t (X)= \delta_t \Delta (X),
\end{equation} for every $X\in C(\Omega,A)$, where for each $a\in A$, $\Gamma (a) = \widehat{a}$ is the constant function with value $a$ on $\Omega$. Indeed, by hypothesis, every derivation $D: C(\Omega,A)\to C(\Omega,A)$ is inner, and hence of the form $D(X) = [Z,X]$ ($\forall X\in C(\Omega,A)$), where $Z\in C(\Omega,A)$. Thus, for each $t\in \Omega$ we have $\delta_t D \Gamma \delta_t (X)= \delta_t D (X)$, for every $X\in C(\Omega,A)$. Let $\phi$ be any element in $A^*$ and consider the functional $\phi\otimes \delta_t : C(\Omega,A)\to \mathbb{C}$, $X\to \phi(X(t))$. By the weak-2-local property of $\Delta$, for each $X\in C(\Omega,A)$, there exists a derivation $D=$ $D_{X,\Gamma(X(t)),\phi\otimes \delta_t}: C(\Omega,A)\to C(\Omega,A)$, depending on $X$, $\Gamma(X(t))=\Gamma \delta_t (X),$ and $\phi\otimes \delta_t$, such that $$ \phi \left( \delta_t \Delta (X) -  \delta_t \Delta \Gamma \delta_t (X)\right)= \phi\otimes \delta_t \left( \Delta (X) - \Delta \Gamma \delta_t (X)\right) = \phi\otimes \delta_t \left( D (X) - D \Gamma \delta_t (X)\right) = 0,$$ because $\delta_t D \Gamma \delta_t (X)= \delta_t D (X)$. The arbitrariness of $\phi\in A^*$ proves \eqref{eq compositon delta t}.\smallskip

Since the operators $\delta_{t} : C(\Omega,A) \to A$ is a continuous $A$-module homomorphism, and $\Gamma : A\to C(\Omega,A)$ is a homomorphism satisfying $\Gamma (a) A = a A$ and $A \Gamma(a) = A a$, for every $a\in A$, $A\in C(\Omega,A)$, Lemma \ref{l composition module hom}$(c)$ implies that  $\delta_t \Delta \Gamma : A\to A$ is a weak-2-local derivation for every $t\in \Omega$. The additional hypothesis on $A$ assure that $\delta_t \Delta \Gamma $ is a linear derivation. Therefore $$ \delta_t \Delta \Gamma (X(t)+Y(t)) = \delta_t \Delta \Gamma (X(t)) +\delta_t \Delta \Gamma (Y(t)),$$ for every $X,Y$ in $C(\Omega,A)$. By \eqref{eq compositon delta t} $$\delta_t \Delta (X+Y) = \delta_t \Delta \Gamma \delta_t (X+Y) = \delta_t \Delta \Gamma \delta_t (X) +\delta_t \Delta \Gamma \delta_t (Y) = \delta_t \Delta (X)+ \delta_t \Delta (Y),$$ for every $t\in \Omega$, $X,Y\in C(\Omega,A)$. In particular, $\delta_t \Delta$ is a linear mapping, as we desired.
\end{proof}

The previous theorem can be now applied to provide new non-trivial examples of C$^*$-algebras on which every weak-2-local derivation is a derivation.

\begin{corollary}\label{c weak-2-local der on finite dimensional and atomic} Let $H$ be a complex Hilbert space. Then every weak-2-local derivation $\Delta:C(\Omega,B(H))\to C(\Omega,B(H))$ is a {\rm(}linear{\rm)} derivation. Furthermore, for an atomic von Neumann algebra $A$, every weak-2-local derivation $\Delta:C(\Omega,A)\to C(\Omega,A)$ is a {\rm(}linear{\rm)} derivation.
\end{corollary}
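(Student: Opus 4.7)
The plan is to derive this corollary as a direct application of Theorem \ref{t weak-2-local der on CKA}, so the work reduces to checking that both hypotheses of that theorem hold when $A=B(H)$ or, more generally, when $A$ is an atomic von Neumann algebra. First I would verify the weak-2-local hypothesis on $A$: the results recalled in the introduction, obtained by Cabello and the second author in \cite{CaPe2016}, assert precisely that every weak-2-local derivation on $B(H)$ for an arbitrary complex Hilbert space $H$ is a linear derivation, and the same conclusion holds for every atomic von Neumann algebra. This takes care of the first hypothesis of Theorem \ref{t weak-2-local der on CKA} in both cases.

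Next I would verify the ``inner derivation'' hypothesis. Since $B(H)$ is a von Neumann algebra and every atomic von Neumann algebra is, by definition, a von Neumann algebra, the Akemann--Johnson theorem stated as Theorem \ref{l C(KvN) has the inner derivation property or rigidity property} applies: every derivation of the C$^*$-tensor product $C(\Omega)\otimes A\cong C(\Omega,A)$ is inner. This supplies the second hypothesis of Theorem \ref{t weak-2-local der on CKA}.

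With both hypotheses in place, Theorem \ref{t weak-2-local der on CKA} immediately yields that every weak-2-local derivation $\Delta:C(\Omega,A)\to C(\Omega,A)$ is a linear derivation, which is exactly the statement of the corollary, first in the special case $A=B(H)$ and then in the general atomic case. Since $B(H)$ is itself an atomic von Neumann algebra, the first assertion is in fact a particular instance of the second; it seems worth stating it separately only because it is the most frequently encountered example.

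There is no serious obstacle here: the genuine technical content sits in Theorem \ref{t weak-2-local der on CKA}, in the results of \cite{CaPe2016}, and in the Akemann--Johnson theorem. If there is any subtlety at all, it is merely to confirm that the notion of ``atomic von Neumann algebra'' used when invoking \cite{CaPe2016} matches the one used when invoking Theorem \ref{l C(KvN) has the inner derivation property or rigidity property} (which only requires $A$ to be a von Neumann algebra and thus imposes no restriction beyond the atomic hypothesis). Once that is noted, the proof is essentially a one-line combination of the three cited results.
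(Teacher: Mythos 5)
Your proposal is correct and follows exactly the paper's own argument: verify the two hypotheses of Theorem \ref{t weak-2-local der on CKA} via the Akemann--Johnson theorem (Theorem \ref{l C(KvN) has the inner derivation property or rigidity property}) and the results of \cite{CaPe2016} on weak-2-local derivations on $B(H)$ and on atomic von Neumann algebras, then conclude. Your additional observation that the $B(H)$ case is subsumed by the atomic case is accurate and harmless.
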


\begin{proof} Theorem \ref{l C(KvN) has the inner derivation property or rigidity property} proves that every derivation on $C(\Omega,B(H))$ is inner. It is also known that every weak-2-local derivation on $B(H)$ is a linear derivation (see \cite[Theorem 3.1]{CaPe2016}). Theorem \ref{t weak-2-local der on CKA} implies that every derivation on $C(\Omega,B(H))$ is a linear derivation.\smallskip

The statement for atomic von Neumann algebras follows from the same arguments but replacing  \cite[Theorem 3.1]{CaPe2016} with  \cite[Corollary 3.5]{CaPe2016}.
\end{proof}

We observe that Corollary \ref{c weak-2-local der on finite dimensional and atomic} provides a generalization of a recent result due to Sh. Ayupov and F.N. Arzikulov (compare \cite[Theorem 1]{AyuArz}).

\begin{corollary}\label{c Ayupov Arzikulov}\cite[Theorem 1]{AyuArz} Let $H$ be a complex Hilbert space. Then every 2-local derivation $\Delta:C(\Omega,B(H))\to C(\Omega,B(H))$ is a {\rm(}linear{\rm)} derivation.$\hfill\Box$
\end{corollary}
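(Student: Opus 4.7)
The plan is to deduce this statement directly from Corollary \ref{c weak-2-local der on finite dimensional and atomic} by verifying that the class of 2-local derivations is contained in the class of weak-2-local derivations. This inclusion is the only nontrivial point and it is essentially a tautology, so I expect no genuine obstacle here: the real work has already been done in Theorem \ref{t weak-2-local der on CKA} and in the proof of Corollary \ref{c weak-2-local der on finite dimensional and atomic}.

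More precisely, suppose $\Delta : C(\Omega,B(H)) \to C(\Omega,B(H))$ is a 2-local derivation. Fix arbitrary $X,Y \in C(\Omega,B(H))$ and $\phi \in C(\Omega,B(H))^{*}$. By the 2-local hypothesis applied to the pair $(X,Y)$, there exists a derivation $D_{X,Y}$ on $C(\Omega,B(H))$ (depending only on $X$ and $Y$, not on $\phi$) such that $\Delta(X)=D_{X,Y}(X)$ and $\Delta(Y)=D_{X,Y}(Y)$. Setting $T_{X,Y,\phi}:=D_{X,Y}$ we obviously obtain
\[
\phi \Delta(X) = \phi\, T_{X,Y,\phi}(X) \quad\text{and}\quad \phi \Delta(Y) = \phi\, T_{X,Y,\phi}(Y),
\]
so $\Delta$ is a weak-2-local derivation in the sense of the paper.

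Once this observation is in place, Corollary \ref{c weak-2-local der on finite dimensional and atomic} (applied to the atomic von Neumann algebra $A=B(H)$, since $B(H)$ is certainly atomic) immediately yields that $\Delta$ is a linear derivation on $C(\Omega,B(H))$, completing the proof. No separate appeal to continuity or to the results of \cite{AyuArz} is needed; the statement falls out as a free corollary of the stronger weak-2-local result, which is precisely the point the authors are making.
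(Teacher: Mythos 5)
Your proposal is correct and matches the paper's intended argument exactly: the corollary is stated as an immediate consequence of Corollary \ref{c weak-2-local der on finite dimensional and atomic} via the trivial observation that every 2-local derivation is a weak-2-local derivation (take $T_{X,Y,\phi}:=D_{X,Y}$ independently of $\phi$). The only cosmetic remark is that the first clause of Corollary \ref{c weak-2-local der on finite dimensional and atomic} already covers $C(\Omega,B(H))$ directly, so the detour through atomic von Neumann algebras is unnecessary, though harmless.
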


We observe that for an infinite dimensional separable complex Hilbert space $H$, we can always find a derivation on the C$^*$-algebra $K(H)$ of all compact operators on $H$ which is not inner (see \cite[Example 4.1.8]{S}). Since the a similar conclusion remains valid for $C(\Omega,K(H))$, we cannot apply Theorem \ref{t weak-2-local der on CKA} in this case. We shall see next how to avoid the difficulties.

Throughout the paper, $M_n = M_n(\mathbb{C})$ will denote the complex $n\times n$-matrices. For each $i,j\in\{1,\ldots,n\}$, $e_{ij}$ will denote the unit matrix in $M_n$ with $1$ in the $(i,j)$ component and zero otherwise. Given a C$^*$-algebra $A$, the symbol $M_n (A)$ will stand for the $n\times n$-matrices with entries in $A$. It is known that $M_n (A)$ is a C$^*$-algebra with respect to the product and involution defined by $\displaystyle (a_{ij})_{i,j} (b_{ij})_{i,j} = \left(\sum_{k=1}^{n} a_{ik} b_{kj} \right)_{i,j}$ and $(a_{ij})_{i,j}^{*} = (a_{ji}^*)_{i,j}$, respectively (compare \cite[\S IV.3]{Tak}). The space $M_n (A)$ also is a Banach $A$-bimodule for the products $ b (a_{ij}) = (b a_{ij}),$ and $(a_{ij}) b = ( a_{ij} b)$. Given $a\in A$ and $i,j\in \{1,\ldots, n\}$, the symbol $a\otimes e_{ij}$ will denote the matrix in $M_n(A)$ with entry $a$ in the $(i,j)$-position and zero otherwise.\smallskip

The following lemma might be known, it is included here due to the lack of an explicit reference.

\begin{lemma}\label{l weakstar times norm convergent} Let $(z_{\lambda})$ and $(x_{\lambda})$ be bounded nets in a von Neumann algebra $M$ such that $(z_{\lambda})\to z_0$ in the weak$^*$-topology of $M$ and $(x_{\lambda})\to x_0$, in the norm topology of $M$. Then $(z_{\lambda} x_{\lambda})\to z_0 x_0$ in the weak$^*$-topology.
\end{lemma}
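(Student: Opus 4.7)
The plan is to reduce the statement to two short estimates via the telescoping identity
\[
z_\lambda x_\lambda - z_0 x_0 \;=\; z_\lambda\,(x_\lambda - x_0) \;+\; (z_\lambda - z_0)\,x_0.
\]
Fixing an arbitrary $\phi$ in the (unique) predual $M_*$ of $M$, it suffices to show that $\phi(z_\lambda x_\lambda) \to \phi(z_0 x_0)$, and I shall handle the two summands above separately.

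For the first piece I exploit pure norm continuity of the product in $M$: setting $K = \sup_\lambda \|z_\lambda\| < \infty$ (finite by the boundedness hypothesis), one obtains
\[
\bigl|\phi\bigl(z_\lambda (x_\lambda - x_0)\bigr)\bigr| \;\leq\; \|\phi\|\, K\, \|x_\lambda - x_0\| \;\longrightarrow\; 0,
\]
since $(x_\lambda) \to x_0$ in norm. Boundedness of $(z_\lambda)$ is precisely what is needed here to absorb the norm-small factor $x_\lambda - x_0$ uniformly in $\lambda$.

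For the second piece I invoke Sakai's theorem on the separate weak$^*$-continuity of multiplication in a von Neumann algebra, already recalled in the paper via \cite[Theorem 1.7.8]{S}. That result ensures that the linear functional $z \mapsto \phi(z x_0)$ is weak$^*$-continuous on $M$, and hence belongs to $M_*$; therefore the hypothesis $(z_\lambda) \to z_0$ in the weak$^*$-topology yields
\[
\phi\bigl((z_\lambda - z_0)\, x_0\bigr) \;\longrightarrow\; 0.
\]
Adding the two estimates gives $\phi(z_\lambda x_\lambda) \to \phi(z_0 x_0)$ for every $\phi \in M_*$, which is exactly the desired weak$^*$-convergence. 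There is no substantive obstacle: the only nontrivial input is the separate weak$^*$-continuity of the product, and the boundedness hypothesis on $(z_\lambda)$ plays the role one would expect in a standard triangle-inequality argument; no analogous control on $(x_\lambda)$ is required beyond its norm convergence (which already implies its boundedness).
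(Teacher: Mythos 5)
Your proof is correct and follows the same overall scheme as the paper: the same telescoping decomposition $z_\lambda x_\lambda - z_0 x_0 = z_\lambda(x_\lambda - x_0) + (z_\lambda - z_0)x_0$, with the second summand handled by Sakai's separate weak$^*$-continuity of the product. The only difference is in the first summand: the paper restricts to norm-one \emph{positive} normal functionals and applies the Cauchy--Schwarz inequality $|\phi(z_\lambda(x_\lambda-x_0))|^2 \leq \phi(z_\lambda z_\lambda^*)\,\phi((x_\lambda-x_0)^*(x_\lambda-x_0))$, whereas you simply observe that $\|z_\lambda(x_\lambda-x_0)\| \leq K\|x_\lambda-x_0\| \to 0$, so this term converges to $0$ in norm and a fortiori weak$^*$. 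Your estimate is more elementary and avoids the (here unnecessary) reduction to positive functionals; the Cauchy--Schwarz route would only become essential under a weaker hypothesis on $(x_\lambda)$, e.g.\ strong$^*$ rather than norm convergence. Both arguments are complete and correct.
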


\begin{proof} We can assume that $\|z_{\lambda}\|,\|x_{\lambda}\|\leq 1$, for every $\lambda$. The net $(x_{\lambda}- x_0 )\to 0$ in norm. For each norm-one positive normal functional $\phi\in M_*$, it follows from the Cauchy-Schwarz inequality that $$| \phi (z_{\lambda} (x_{\lambda}- x_0 ))|^{2} \leq \phi (z_{\lambda} z_{\lambda}^*) \phi ((x_{\lambda}- x_0 )^* (x_{\lambda}- x_0 )) \leq \|x_{\lambda}- x_0\|^2 \to 0.$$ We deduce that $ (z_{\lambda} (x_{\lambda}- x_0 ) )\to 0$ in the weak$^*$-topology of $M$. Since the product of $M$ is separately weak$^*$-continuous, we also know that $(z_{\lambda} x_0 )\to z_0 x_0$ in the weak$^*$-topology, and hence $(z_{\lambda} x_{\lambda})\to z_0 x_0$ in the weak$^*$-topology.
\end{proof}

Suppose $\hbox{ad}_{x_0}  : A \to A$, $a\mapsto  [x_0,a ]$ is an inner derivation on a C$^*$-algebra. It is well known that the element $x_0$ is not uniquely determined by  $\hbox{ad}_{x_0}$, for example $\hbox{ad}_{x_0} = \hbox{ad}_{y_0}$ as derivations on $A$ if and only if $x_0-y_0\in Z(A)$.\smallskip

Concerning norms, it is easy to see that $\| [x_0, . ] \| \leq 2 \|x_0\|$, where $\| [x_0, . ] \|$ denotes the norm of the linear derivation in $B(A)$. It is not obvious that an element in the set $x_0+Z (A)$ can be bounded by a multiple of the norm of the inner derivation $[x_0, . ]$. In this line, R.V. Kadison, E.C. Lance and J.R. Ringrose prove, in \cite[Theorem 3.1]{KadLanRing47}, that for each $^*$-derivation $D$ on a C$^*$-algebra $A$, if $\widetilde{D}$ denotes its unique extension to a derivation on $A^{**}$, then there is a unique self-adjoint element $a_0$ in $A^{**}$ such that $\widetilde{D} = \hbox{ad}_{a_0}=[a_0,.]$ and, for each central projection $q$ in $A^{**}$, we have $\| a_0 q\| = \frac12 \| \widetilde{D}|_{A^{**} q}\|$. In particular $\|D\| = 2 \|a_0\|$.\smallskip

Let $D: A\to A$ be a derivation on a C$^*$-algebra. We can write $D = D_1 + i D_2$, where $D_1 = \frac12 (D+D^{\sharp})$, $D_2 = \frac{1}{2 i} (D-D^{\sharp})$ are $^*$-derivations on $A$ with $\|D_{j}\| \leq \|D\|$, for every $j\in\{1,2\}$. Let $\widetilde{D}, \widetilde{D}_j : A^{**} \to A^{**}$ denote the unique extension of $D$ and $D_j$ to a derivation on $A^{**}$, respectively. Applying the just quoted result, we find $a_0,b_0\in A^{**}_{sa}$ satisfying $\widetilde{D}_1 = \hbox{ad}_{a_0}=[a_0,.]$, $\widetilde{D}_2 = \hbox{ad}_{b_0}=[b_0,.]$ and, $\|a_0\|, \|b_0\|\leq \frac12 \|D\|$. Then $\widetilde{D}  (x) = [a_0 + i b_0, x]$, for every $x\in A^{**}$, with $\|a_0+ i b_0 \|\leq \|D\|$.\smallskip

In general, $C(\Omega,M_n)= C(\Omega)\otimes M_n$ need not be a von Neumann algebra and its second dual is too big for our purposes. We have already commented that every derivation $D:C(\Omega,M_n) \to C(\Omega,M_n)$ is inner, so there exists $X_0\in C(\Omega,M_n)$ satisfying $D(X) = [X_0,X]$, for every $X\in C(\Omega,M_n)$. If $D$ is a $^*$-derivation we can assume that $X_0^* = -X_0$. Let us assume that $D$ is a $^*$-derivation. We know that $X_0$ can be replaced with any element in the set $X_0 + i Z(C(\Omega,M_n))_{sa} = \{ X_0 + i f\otimes I_n: f\in C(\Omega)_{sa}\}$, where $I_n$ stands for the unit in $M_n$. The question is whether we can find an element $X_0 + i f\otimes I_n$ satisfying $$\|D\| = \|[X_0 + i f, .]\|\geq \|X_0 + i f\|.$$

When $z$ is a symmetric (or a skew symmetric) operator in $B(H)$, J.G. Stampfli establishes in \cite[Corollary 1]{Stampfli} that \begin{equation}\label{eq norm [z,.] for skew z 0} \|[z,.] \| = 2 \rho (\sigma(z)) = \hbox{diam} (\sigma(z))\leq 2 \|z\|,\end{equation}  and consequently, {if $0\in\sigma(z)$ then}
\begin{equation}\label{eq norm [z,.] for skew z}\|z\| \leq \|[z,.] \| =  \hbox{diam} (\sigma(z))\leq 2 \|z\|.
\end{equation}

We shall require a variant of the previous estimations.

\begin{proposition}\label{p Stampfli on CKMn} Let $D: C(\Omega,M_n) \to C(\Omega,M_n)$ be a $^*$-derivation, where $\Omega$ is a compact Hausdorff space. Then there exists a unique $Z_0\in C(\Omega,M_n)$ satisfying $Z_0^*= -Z_0$, $D(.) = [Z_0,.]$, $\sigma(-i Z_0)\subseteq \mathbb{R}_0^{+}$, and $ \|Z_0\| = \|D\|.$
\end{proposition}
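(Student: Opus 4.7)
The plan is to produce $Z_0$ in three moves: obtain any element implementing $D$, project it onto the skew-adjoint part, and then shift by an imaginary central scalar function so that the spectrum of $-iZ_0(t)$ lies in $\mathbb{R}_0^+$ and touches $0$ at every $t$. First, Theorem \ref{l C(KvN) has the inner derivation property or rigidity property} (Akemann--Johnson) supplies $Y_0\in C(\Omega,M_n)$ with $D=[Y_0,\cdot]$. Since $D$ is a $^*$-derivation, computing $D(X^*)^*$ and comparing with $D(X)$ forces $[Y_0+Y_0^*,X]=0$ for every $X$. Because $Z(C(\Omega,M_n))=\{g\otimes I_n:g\in C(\Omega)\}$ (this is checked pointwise using $Z(M_n)=\mathbb{C} I_n$), there is a real-valued $g\in C(\Omega)$ with $Y_0+Y_0^*=g\otimes I_n$, and $Z^{(1)}:=Y_0-\tfrac12\,g\otimes I_n$ is skew-adjoint and still satisfies $[Z^{(1)},\cdot]=D$.

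For the second move I normalise the spectrum. For each $t\in\Omega$ the matrix $-iZ^{(1)}(t)$ is self-adjoint, and I set $h(t):=-\min\sigma(-iZ^{(1)}(t))$. Since the eigenvalues of a self-adjoint matrix depend continuously on the matrix and $Z^{(1)}\in C(\Omega,M_n)$ is norm-continuous, $h\in C(\Omega,\mathbb{R})$. Put $Z_0:=Z^{(1)}+i\,h\otimes I_n$; this remains skew-adjoint, the perturbation is central so $[Z_0,\cdot]=D$ is preserved, and $\sigma(-iZ_0(t))=\sigma(-iZ^{(1)}(t))+h(t)\subseteq\mathbb{R}_0^+$ with $0$ attained. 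In particular $\|Z_0(t)\|=\max\sigma(-iZ_0(t))$.

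For uniqueness, any other $Z_0'$ satisfying the same conditions differs from $Z_0$ by a skew-adjoint central element $ik\otimes I_n$ with $k\in C(\Omega,\mathbb{R})$; comparing the minima of the two spectra forces $k\equiv 0$. For the norm identity I apply Stampfli's formula \eqref{eq norm [z,.] for skew z 0} fiberwise: since $\sigma(-iZ_0(t))$ starts at $0$, its diameter equals $\max\sigma(-iZ_0(t))=\|Z_0(t)\|$, so $\|[Z_0(t),\cdot]\|_{B(M_n)}=\|Z_0(t)\|$. Combining with $\|D(X)\|=\sup_t\|[Z_0(t),X(t)]\|$ gives $\|D\|\leq\|Z_0\|$, while testing $D$ on the constant functions $\widehat{y}$ for unit $y\in M_n$ yields $\|D\|\geq\sup_{t_0}\|[Z_0(t_0),\cdot]\|=\|Z_0\|$. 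The only place where more than bookkeeping is needed is the coordination between the pointwise spectral shift and Stampfli's diameter formula: it is precisely the choice of shift that makes $\max\sigma(-iZ_0(t))$ coincide simultaneously with $\|Z_0(t)\|$ and with $\|[Z_0(t),\cdot]\|$, and this is what converts the a priori inequality $\|D\|\leq 2\|Z_0\|$ into an exact equality.
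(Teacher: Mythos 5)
Your construction follows the paper's proof almost step for step: obtain a skew-adjoint implementing element, subtract a continuous central function so that the spectrum of $-iZ_0(t)$ is normalised at every point, and apply Stampfli's diameter formula fibrewise to convert $\|D\|\leq 2\|Z_0\|$ into $\|D\|=\|Z_0\|$. The only substantive differences are cosmetic: the paper checks continuity of the shift via the Hausdorff-distance estimate for spectra of normal elements rather than Lipschitz dependence of ordered eigenvalues, and your shift by $-\min\sigma(-iZ^{(1)}(t))$ is in fact the one that puts $\sigma(-iZ_0(t))$ inside $\mathbb{R}_0^{+}$ (the paper defines $\sigma_{min}(t)$ as the eigenvalue of minimal modulus, which would fail when the spectrum straddles $0$, although its subsequent computations use your ordering).

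One caveat concerns uniqueness. Your step \lqq comparing the minima of the two spectra forces $k\equiv 0$\rqq\ tacitly uses that both candidates satisfy $\min\sigma(-iZ_0(t))=0$ for \emph{every} $t$. That pointwise normalisation is what your construction (and the paper's) delivers, but it is not among the four listed conditions: from $\sigma(-iZ_0'(t))\subseteq\mathbb{R}_0^{+}$ you only obtain $k\leq 0$, and the single global condition $\|Z_0'\|=\|D\|$ does not close the gap once $\Omega$ has more than one point. For instance, on a two-point $\Omega$ take $Z_0(t_0)=i\,\mathrm{diag}(0,1)$ and $Z_0(t_1)=i\,\mathrm{diag}(0,3)$; then $k(t_0)=-1$, $k(t_1)=0$ produces a second element satisfying all four conditions. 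So uniqueness should be asserted (and proved, as you essentially do) for the element with $0\in\sigma(-iZ_0(t))$ at each $t$; this is an imprecision inherited from the statement itself, and the paper's own proof is silent on uniqueness altogether.
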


\begin{proof} We have already observed that there exists $Z_1\in C(\Omega,M_n)$ satisfying $Z_1^*= -Z_1$ and $D(.) = [Z_1,.]$. For $Z\in C(\Omega,M_n)$ with $Z^*=-Z$ we set $$\hbox{diam} (\sigma(Z)) := \sup_{t\in \Omega} \hbox{diam} (\sigma(Z(t))).$$

We claim that \begin{equation}\label{eq norm Stampfli} \| D\| = \|  [Z_1,.]\| = \hbox{diam} (\sigma(Z)).
\end{equation} Indeed, for each $X\in C(\Omega,M_n)$ with $\|X\|\leq 1$, it follows from \eqref{eq norm [z,.] for skew z 0} that $$\|[Z_1,X]\| = \sup_{t\in \Omega} \|[Z_1(t),X(t)]\| \leq \sup_{t\in \Omega} \hbox{diam} (\sigma(Z_1(t))) = \hbox{diam} (\sigma(Z_1)).$$ To see the reciprocal inequality, given $\varepsilon>0$, there exists $t_0\in \Omega$ such that $\hbox{diam} (\sigma(Z_1))-\varepsilon < \hbox{diam} (\sigma(Z_1(t_0))).$ Since $[Z_1(t_0), .] : M_n\to M_n$ is a bounded linear operator on a finite dimensional space, we can find a norm-one element $b\in M_n$ such that $\| [Z_1(t_0), b] \|=\| [Z_1(t_0), .] \|= \hbox{diam} (\sigma(Z_1(t_0)))$. Clearly, $\Gamma (b)\in C(\Omega,M_n),$ $\|\Gamma (b) \|\leq 1$ and $$\| [Z_1, .]\| \geq \|[Z_1, \Gamma(b)]\|\geq \|[Z_1(t), \Gamma(b) (t)]\| = \| [Z_1(t_0), b] \|   >\hbox{diam} (\sigma(Z_1))-\varepsilon,$$ which proves the claim.\smallskip

We define a function $\sigma_{min} : \Omega\to \mathbb{C}$, $\sigma_{min} (t) := \lambda\in \sigma(Z_1(t))$, where $\lambda$ is the unique element in $\sigma(Z_1(t))\subseteq i \mathbb{R}$ satisfying $|\lambda| = \min\{|\mu|: \mu \in \sigma (Z_1(t))\}$.\smallskip

Let us recall some notation. Suppose $K_1$ and $K_2$ are non-empty compact subsets of $\mathbb{C}$. The Hausdorff distance between $K_1$ and $K_2$ is defined by $$d_{H} (K_1,K_2) = \max\{ \sup_{t\in K_1} \hbox{dist} (t,K_2), \sup_{s\in K_2} \hbox{dist} (s,K_1) \}.$$ By \cite[Theorem 6.2.1$(v)$]{Aup91} the inequality $$d_{H} (\sigma(a),\sigma(b)) \leq \|a-b\|,$$ holds for all normal elements $a,b$ in a C$^*$-algebra $A$. Applying the above inequality, and the continuity of $-Z_1^*=Z_1(.): \Omega\to M_n$, we shall see that $\sigma_{min} \in C(\Omega)$. Namely, fix $t_0\in \Omega$, $\varepsilon >0$ and an open neighborhood $t_0\in U$ such that $$d_{H} (\sigma(X(t)),\sigma(X(t_0)))\leq \|X(t)-X(t_0)\|<{\varepsilon},$$ for every $t\in U$. Let us write $\sigma(X(t)) = \{\sigma_{min} (t)=\lambda_{1}(t),\lambda_{2}(t),\ldots,\lambda_{n} (t)\}$ and $\sigma(X(t_0)) = \{\sigma_{min} (t_0)=\lambda_{1}(t_0),\lambda_{2}(t_0),\ldots,\lambda_{n} (t_0)\}$ with $-i \sigma_{min} (t)\leq -i \lambda_{2}(t)\leq \ldots\leq -i \lambda_{n} (t)$ and $-i \sigma_{min} (t_0)$ $\leq -i \lambda_{2}(t_0)$ $\leq \ldots\leq -i \lambda_{n} (t_0)$. In this case, for every $t\in U$, there exist $\lambda_{j}(t)$ and $\lambda_k (t_0)$ such that $|\sigma_{min} (t_0) - \lambda_{j}(t)|<\varepsilon$ and $|\sigma_{min} (t) - \lambda_{k}(t_0)|<\varepsilon.$\smallskip

\noindent If $- i \sigma_{min} (t_0) \leq- i \sigma_{min} (t)  \leq -i \lambda_j (t)$ we have $$|\sigma_{min} (t_0) - \sigma_{min} (t)|\leq |\sigma_{min} (t_0) - \lambda_{j}(t)|<\varepsilon.$$ If $- i \sigma_{min} (t) <- i \sigma_{min} (t_0)  \leq -i \lambda_k (t_0)$ we have $$|\sigma_{min} (t_0) - \sigma_{min} (t)|\leq |\sigma_{min} (t) - \lambda_{k}(t_0)|<\varepsilon.$$ Therefore $|\sigma_{min} (t_0) - \sigma_{min} (t)| <\varepsilon$, for every $t\in U$.\smallskip

Clearly, $\sigma_{min}\otimes I_n\in Z(C(\Omega,M_n))$ and $0\in \sigma(Z_1-\sigma_{min}\otimes I_n)$. Applying \eqref{eq norm Stampfli} we conclude that $$ \| D\| = \|  [Z_1-\sigma_{min}\otimes I_n,.]\| = \hbox{diam} (\sigma(Z_1-\sigma_{min}\otimes I_n))= \|Z_1-\sigma_{min}\otimes I_n\|,$$ which proves the desired statement for $Z_0 = Z_1-\sigma_{min}\otimes I_n$.
\end{proof}

\begin{remark}\label{r prop Stampfli for finite dimensional Cstaralgebras}{\rm Let $\displaystyle N=\bigoplus_{1\leq j\leq m}^{ \ell_{\infty}} M_{n_j}$ be an arbitrary finite dimensional C$^*$-algebra (compare \cite[Theorem I.11.2]{Tak}). Let $D: C(\Omega,N) \to C(\Omega,N)$ be a $^*$-derivation, where $\Omega$ is a compact Hausdorff space. Then there exists a unique $Z_0\in C(\Omega,N)$ satisfying $Z_0^*= -Z_0$, $D(.) = [Z_0,.]$, $\sigma(-i Z_0)\subseteq \mathbb{R}_0^{+}$, and $ \|Z_0\| = \|D\|.$ Indeed, we can identify $C(\Omega,N)$ with the $\ell_{\infty}$-sum $\displaystyle \bigoplus_{1\leq j\leq m}^{ \ell_{\infty}} C(\Omega,M_{n_j}).$ It is known that $D (C(\Omega,M_{n_j})) \subseteq C(\Omega,M_{n_j})$ for every $j$ (compare \cite[Lemma 3.3 and its proof]{CaPe2016}). Therefore $D_j = D |_{C(\Omega,M_{n_j})} : C(\Omega,M_{n_j})\to C(\Omega,M_{n_j})$ is a derivation for every $j$, and we identify $D$ with the direct sum of all $D_j$. The desired conclusion follows by applying the above Proposition \ref{p Stampfli on CKMn} to each $D_j$.
}\end{remark}

In accordance with the notation in \cite{CaPe2016}, henceforth, the set of all finite dimensional subspaces of $H$ will be denoted by $ \mathfrak{F} (H).$ This set is equipped with the natural order given by inclusion, and for each $F\in \mathfrak{F} (H)$, $p_{_F}$ will denote the orthogonal projection of $H$ onto $F$. Given a compact Hausdorff space $\Omega$. We set $\widehat{p}_{_F} = \Gamma (p_{_F})$, where $\Gamma : K(H) \to C(\Omega) {\otimes}K(H)$ is the mapping defined before Theorem \ref{t weak-2-local der on CKA}.\smallskip

An \emph{approximate unit} or \emph{identity} in a C$^*$-algebra $A$ is a net $(u_\lambda)\subseteq A$ satisfying $0\leq u_{\lambda} \leq 1$ for every $\lambda$, $u_\lambda\leq u_{\mu}$ for every $\lambda\leq \mu$ and $\lim_{\lambda} ||x - u_{\lambda} x\| = 0$ for each $x$ in $A$. In these conditions, $\lim_{\lambda} \|x - x u_{\lambda}\| = 0$ as well.\smallskip

The following proposition, which has not been explicitly treated in the literature, is all we shall require to deal with the case of weak-2-local derivations on $C(\Omega,K(H))$.

\begin{proposition}\label{p derivations on C(K,K(H))}  Let $D:C(\Omega,K(H))\to C(\Omega,K(H))$ be a derivation, where $H$ is a complex Hilbert space. Let $\tau$ denote the topology of $\Omega$. Then there exists a $\tau$-weak$^*$-continuous, bounded mapping $Z_0:  \Omega\to B(H)$ satisfying $D(X) (t) = [Z_0(t),X(t)]$, for every $X\in C(\Omega,K(H))$. In particular, for each $t$ in $\Omega$ we have $\delta_t D \Gamma \delta_t (A)= \delta_t D (A),$ for every $A\in C(\Omega,K(H))$.
\end{proposition}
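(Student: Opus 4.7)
The plan is to show that, for each fixed $t\in\Omega$, the derivation $D$ descends to a derivation on the quotient $C(\Omega,K(H))/\ker\delta_t\cong K(H)$, and then to combine Sakai's theorem for $B(H)\cong K(H)^{**}$ with a compression argument to glue the pointwise generators into a single $\tau$-weak$^*$-continuous $Z_0$.

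First I would fix $t\in\Omega$, let $I_t:=\ker\delta_t$, and take an approximate unit $(u_\lambda)\subseteq I_t$, which exists since $I_t$ is itself a C$^*$-algebra. For any $Y\in I_t$, the Leibniz rule yields $D(u_\lambda Y)(t)=D(u_\lambda)(t)\,Y(t)+u_\lambda(t)\,D(Y)(t)=0$, because $u_\lambda(t)=Y(t)=0$. Passing to the norm limit and invoking Ringrose's automatic continuity theorem gives $D(Y)(t)=0$, so $D(I_t)\subseteq I_t$. Consequently $D$ induces a continuous derivation $\overline D_t:K(H)\to K(H)$ with $\delta_t\circ D=\overline D_t\circ\delta_t$. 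Since $K(H)^{**}\cong B(H)$ and every derivation on $B(H)$ is inner by Sakai's theorem, there exists $W_t\in B(H)$ with $\overline D_t(a)=[W_t,a]$ for all $a\in K(H)$, which already yields both the formula $D(X)(t)=[W_t,X(t)]$ and the final claim of the proposition. To fix the residual scalar ambiguity, I would choose once and for all a unit vector $e_1\in H$ and set $Z_0(t):=W_t-\langle W_te_1,e_1\rangle I_H$, the unique implementer satisfying $\langle Z_0(t)e_1,e_1\rangle=0$. Splitting $\overline D_t$ into its $^*$-derivation components and applying the Kadison--Lance--Ringrose norm identity gives the uniform bound $\|Z_0(t)\|\le 2\|D\|$.

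The main obstacle is the $\tau$-weak$^*$-continuity of $Z_0$, which I would handle by a compression-and-gluing argument. For each $F\in\mathfrak F(H)$ the compression $D_F(X):=\widehat p_F D(\widehat p_F X\widehat p_F)\widehat p_F$ is a derivation on $\widehat p_F C(\Omega,K(H))\widehat p_F\cong C(\Omega,M_{\dim F})$, and Remark~\ref{r prop Stampfli for finite dimensional Cstaralgebras} applied to its $^*$-derivation components produces a norm-continuous map $Z_F:\Omega\to M_{\dim F}$ with $D_F(X)(t)=[Z_F(t),X(t)]$. Comparing with the pointwise formula on elements of $p_F K(H)p_F$ shows that $p_F Z_0(t)p_F-Z_F(t)$ is central in $M_{\dim F}$, so $p_F Z_0(t)p_F=Z_F(t)+\lambda_F(t)p_F$ for some scalar $\lambda_F(t)$. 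For any $F\ni e_1$, evaluating the $(e_1,e_1)$-entry and using the normalization forces $\lambda_F(t)=-\langle Z_F(t)e_1,e_1\rangle$, which is continuous in $t$; hence $t\mapsto p_F Z_0(t)p_F$ is norm-continuous in $B(H)$. For arbitrary $\xi,\eta\in H$, choosing $F\in\mathfrak F(H)$ with $\xi,\eta,e_1\in F$ gives $\langle Z_0(t)\xi,\eta\rangle=\langle p_F Z_0(t)p_F\xi,\eta\rangle$, and so this function of $t$ is continuous. Since $Z_0$ is uniformly bounded and the weak operator topology coincides with the weak$^*$ topology on norm-bounded subsets of $B(H)$, we conclude that $Z_0$ is $\tau$-weak$^*$-continuous, completing the proof. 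The entire argument hinges on the normalization $\langle Z_0(t)e_1,e_1\rangle=0$: without it, the pointwise generators live only in $B(H)/\mathbb{C}I_H$ and the finite-dimensional compressed generators (rigid only modulo a central scalar in $M_{\dim F}$) would not glue into a single weak$^*$-continuous function.
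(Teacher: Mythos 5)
Your proof is correct, but it reaches the key formula $D(X)(t)=[Z_0(t),X(t)]$ by a genuinely different route. The paper never passes to the quotient $C(\Omega,K(H))/\ker\delta_t$: it obtains $Z_0(t)$ as a weak$^*$-cluster point of the net $(Z_{_F}(t))_{F\in\mathfrak{F}(H)}$ of compressed implementers, each pinned down by the spectral normalization $\sigma(-iZ_{_F})\subseteq\mathbb{R}_0^{+}$ and the norm identity of Proposition~\ref{p Stampfli on CKMn}, and then verifies the commutator formula by a limit interchange along the approximate unit $(\widehat{p}_{_F})$ using Lemma~\ref{l weakstar times norm convergent}. Your argument replaces all of this with the observation that $D$ preserves the ideal $\ker\delta_t$ (approximate unit plus Ringrose's automatic continuity), hence descends to a derivation of $K(H)$ implemented by some $W_t\in B(H)=K(H)^{**}$ via Sakai's theorem; this is shorter, avoids subnets and the limit-interchange lemma entirely, and delivers the \lqq in particular\rqq\ clause immediately. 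For the $\tau$-weak$^*$-continuity both proofs use the same compression-and-gluing idea --- $p_{_F}Z_0(t)p_{_F}$ differs from a continuous finite-dimensional implementer by a pointwise central scalar --- but your vector-state normalization $\langle Z_0(t)e_1,e_1\rangle=0$ makes that scalar explicitly $\lambda_{_F}(t)=-\langle Z_{_F}(t)e_1,e_1\rangle$ and hence visibly continuous, which is arguably tidier than the paper's treatment of the central correction. Two small points you should make explicit: that the compression $X\mapsto \widehat{p}_{_F}D(\widehat{p}_{_F}X\widehat{p}_{_F})\widehat{p}_{_F}$ is a derivation on the corner (the paper cites \cite{NiPe2014} for this), and that the implementer of a derivation of $K(H)$ is unique in $B(H)$ up to a scalar because $K(H)'=\mathbb{C}I_H$, which is what makes your normalization well defined and the comparison with $Z_{_F}(t)$ legitimate.
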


\begin{proof} Let us first assume that $D$ is a $^*$-derivation.\smallskip

To simplify the notation we write $C=C(\Omega,K(H))$. Pick an arbitrary $F\in \mathfrak{F} (H)$. Applying \cite[Proposition 2.7]{NiPe2014} we deduce that the mapping $$\widehat{p}_{_F} D \widehat{p}_{_F}|_{\widehat{p}_{_F} C \widehat{p}_{_F} } : \widehat{p}_{_F} C \widehat{p}_{_F} \to \widehat{p}_{_F} C \widehat{p}_{_F}, \ {\widehat{p}_{_F}}A {\widehat{p}_{_F}}\mapsto \widehat{p}_{_F} D (\widehat{p}_{_F}  A \widehat{p}_{_F} ) \widehat{p}_{_F}$$ is a derivation on $\widehat{p}_{_F} C \widehat{p}_{_F}\cong C(\Omega,M_n)$, with $n= \hbox{dim} (F)$. Applying Theorem \ref{l C(KvN) has the inner derivation property or rigidity property} and Proposition \ref{p Stampfli on CKMn} we find a unique $Z_{_F}\in \widehat{p}_{_F} C \widehat{p}_{_F}$ satisfying $Z_{_F}^*= -Z_{_F}$, $\widehat{p}_{_F} D(\widehat{p}_{_F}  A \widehat{p}_{_F}) \widehat{p}_{_F} = [Z_{_F},\widehat{p}_{_F}  A \widehat{p}_{_F}]$, for every $A\in C$,  $\sigma(-i Z_{_F})\subseteq \mathbb{R}_0^{+}$, and $ \|Z_{_F}\| = \| \widehat{p}_{_F} D \widehat{p}_{_F}|_{\widehat{p}_{_F} C \widehat{p}_{_F} } \|\leq \|D\|.$\smallskip

Fix $t$ in $\Omega$. The net $(Z_{_F}(t))_{F\in \mathfrak{F} (H)}\subseteq K(H)\subseteq B(H)$ is bounded, so there exists a subnet $(Z_{_F}(t))_{F\in \mathfrak{F}'}$ and $Z_0(t) \in B(H)$ such that $\|Z_0(t)\|\leq \|D\|$, and $(Z_{_F}(t))_{F\in \mathfrak{F}'}\to Z_0(t)$ in the weak$^*$-topology of $B(H)$.\smallskip

It is not hard to see that the net $(\widehat{p}_{_F})_{F\in \mathfrak{F} (H)}$ is an approximate unit in $C(\Omega,K(H))$. Indeed, let us fix $A$ in $C(\Omega,K(H))$ and $\varepsilon>0$. Applying the continuity of $A$ and the compactness of $\Omega$, we can find a finite open cover $U_1,\ldots,U_m,$ and points $t_1,\ldots,t_m$ such that $t_j\in U_j$ and $\|A(s)-A(t_j)\|<\frac{\varepsilon}{3},$ for each $s\in U_j$. Since $A(t_1),\ldots, A(t_m)\in K(H)$, we can find $F_1\in \mathfrak{F} (H)$ satisfying $\|A(t_j) - p_{_F} A(t_j) p_{_F}\|< \frac{\varepsilon}{3}$, for every $j$ and every $F\in \mathfrak{F} (H)$ with $F\supseteq F_1$. For each $s$ in $\Omega$, there exists $j$ such that $s\in U_j$, and hence $$\|A(s)- p_{_F} A(s) p_{_F}\| \leq \|A(s)- A(t_j)\|+\| A(t_j) - p_{_F} A(t_j) p_{_F}\|+ \|p_{_F} A(t_j) p_{_F}- p_{_F} A(s) p_{_F}\|<\varepsilon,$$ which proves that $\|A- \widehat{p}_{_F} A(s) \widehat{p}_{_F}\| <\varepsilon,$ every $F\in \mathfrak{F} (H)$ with $F\supseteq F_1$.\smallskip

By the continuity of $D$, given $A$ in $C(\Omega,K(H))$, the nets $(\widehat{p}_{_F} A \widehat{p}_{_F})_{F\in \mathfrak{F} (H)}$, and $(\widehat{p}_{_F} D(\widehat{p}_{_F} A \widehat{p}_{_F}) \widehat{p}_{_F})_{F\in \mathfrak{F} (H)}$ converge in norm to $A$ and $D(A)$, respectively. Consequently, for each $t\in \Omega$, $(\widehat{p}_{_F} A \widehat{p}_{_F} (t))_{F\in \mathfrak{F}'}\to A(t)$, and $(\widehat{p}_{_F} D(\widehat{p}_{_F} A \widehat{p}_{_F}) \widehat{p}_{_F} (t))_{F\in \mathfrak{F}'}\to D(A)(t)$ in the norm topology of $K(H)$. Taking weak$^*$-limit in the identity $$\widehat{p}_{_F} D(\widehat{p}_{_F}  A \widehat{p}_{_F}) \widehat{p}_{_F} (t) = [Z_{_F},\widehat{p}_{_F}  A \widehat{p}_{_F}] (t)  =   [Z_{_F} (t) ,\widehat{p}_{_F}  A \widehat{p}_{_F}(t)],$$ we deduce, via Lemma \ref{l weakstar times norm convergent}, that $D(A) (t) = [Z_0 (t), A(t)]$.\smallskip

When $D$ is a general derivation, we write $D = D_1 + i D_2$ with $D_1$ and $D_2$ $^*$-derivations on $C(\Omega,K(H))$. By the arguments above, there exist bounded maps $Z_1,Z_2 : \Omega\to B(H)$ satisfying $D_j(A) (t) = [Z_j(t), A(t)],$ for every $t\in \Omega$, $A\in C(\Omega,K(H))$. Therefore, $D (A) (t) = [Z_1(t)+ i Z_2(t), A(t)],$ for all $t\in \Omega$, $A\in C(\Omega,K(H))$.\smallskip

We shall finally show that $Z_0:  \Omega\to B(H)$ is $\tau$-weak$^*$-continuous. We have already shown that  for each $F\in \mathfrak{F} (H)$ we have $$ \widehat{p}_{_F} D \widehat{p}_{_F}|_{\widehat{p}_{_F} C \widehat{p}_{_F} } (.) = \widehat{p}_{_F} [Z_{0},\widehat{p}_{_F}\  .\ \widehat{p}_{_F}]\widehat{p}_{_F}= \widehat{p}_{_F} [\widehat{p}_{_F} Z_{0} \widehat{p}_{_F}, . ]\widehat{p}_{_F},$$ and hence, by Theorem \ref{l C(KvN) has the inner derivation property or rigidity property}, there exists $Z_{_F} \in C(\Omega, p_{_F} B(H) p_{_F})$ such that  $$\widehat{p}_{_F} Z_{0} \widehat{p}_{_F} - Z_{_F}\in Z(C(\Omega, p_{_F} B(H) p_{_F}) = C(\Omega)\otimes I_{_F}.$$ Therefore, \begin{equation}\label{eq continuity on finite subspaces} \widehat{p}_{_F} Z_{0} \widehat{p}_{_F}\in C(\Omega, p_{_F} B(H) p_{_F})\subseteq C(\Omega, B(H)).
\end{equation}

Pick a normal functional $\phi\in B(H)_*$. Given $\varepsilon>0$. If we identify $B(H)_*$ with the trace-class operators, we can easily find a finite projection $p_{_F}$ with $F\in \mathfrak{F} (H)$ such that $\|\phi - \phi (p_{_F} . p_{_F})\|<\varepsilon$. Combining this fact with \eqref{eq continuity on finite subspaces} we can easily deduce that $\phi\circ Z_0 : \Omega \to \mathbb{C}$ is continuous, which finishes the proof.
\end{proof}

\begin{remark}\label{new remark}{\rm We cannot assure, in general that the mapping $Z_0: \Omega\to B(H)$ is $\tau$-norm continuous. Let us take an infinite dimensional Hilbert space $H$. Let $p_n$ be a sequence of mutually orthogonal rank one projections in $B(H)$. The von Neumann subalgebra $C$ of $B(H)$ generated by the $p_n$'s is C$^*$-isomorphic to $\ell_{\infty}$.\smallskip

The set $\Omega:= \{ a\in C\cong \ell_{\infty} : \|a\|\leq 1 \}$ is weak$^*$-closed and hence $(\Omega,\tau =\hbox{weak}^*)$ is a compact Hausdorff space. Let $Z_0 : \Omega \to C\subset B(H)$ be the identity mapping, which is $\tau$-weak$^*$-continuous and bounded. Clearly, $Z_0$ is not weak$^*$-norm continuous.\smallskip

It is known that a bounded net $(a_{\lambda})$ in $\ell_{\infty}$ converges in the weak$^*$-topology to an element $a_0$ if and only if for each natural $n$, $(|a_{\lambda} (n)- a_0 (n)|)\to 0$. Suppose $(a_\lambda)\subset \Omega$, $(a_\lambda)\to a_0$ in $\Omega$. Since $a_{\lambda} \to a_0$ in the weak$^*$-topology of $C$, it is not hard to see that $(a_{\lambda}-a_0) (a_{\lambda}-a_0)^* \to 0$ in the weak$^*$-topology of $C$, and hence $(a_{\lambda}-a_0)(a_{\lambda}-a_0)^* \to 0$ in the weak$^*$-topology of $B(H)$.\smallskip

We claim that, for each finite rank projection $p\in B(H)$, the mappings $Z_0 p,$ and $p Z_0$ both are $\tau$-norm continuous. Indeed, let $(a_{\lambda})\to a_0$ in $\Omega$ (i.e. in the weak$^*$-topology of $B(H)$). The arguments given in the above paragraph show that $(a_{\lambda}-a_0)(a_{\lambda}-a_0)^* \to 0$ in the weak$^*$-topology of $B(H).$ Since the operator $U_{p}: B(H)\to B(H)$, $x\mapsto pxp$ has finite rank, we deduce that $\|p (a_{\lambda}-a_0)\|^2 =\| (a_{\lambda}-a_0) p\|^2 =  \| p (a_{\lambda}-a_0) (a_{\lambda}-a_0)^* p\|\stackrel{\lambda}{\to} 0$,  which proves the claim.\smallskip

We shall finally show that for each $X\in C(\Omega, K(H))$, $Z_0 X$ and $X Z_0$ both lie in $C(\Omega, K(H))$. We may assume, without loss of generality, that $\|X\|\leq 1$. Let $(a_{\lambda})\to a_0$ in $\Omega$, and let $\varepsilon>0$. By the continuity of $X$, there exists $\lambda_0\in \Lambda$ such that $\|X(a_{\lambda})-X(a_0)\|<\frac{\varepsilon}{3}$ for every $\lambda>\lambda_0$. Since $X(a_0)\in K(H)$, we can find a finite rank projection $p\in B(H)$ such that $\|X(a_0)-p X(a_0) \|<\frac{\varepsilon}{3}$. By the $\tau$-norm continuity of $Z_0 p$, there exists $\lambda_1\geq \lambda_0$ such that $\|(Z_0(a_{\lambda}) - Z_0(a_0)) p\|<\frac{\varepsilon}{3}$, for all $\lambda\geq \lambda_1$. Therefore for all $\lambda \geq \lambda_1$ we have  $$\|(Z_0 X)(a_{\lambda}) - (Z_0 X)(a_0)\| \leq \| Z_0(a_{\lambda}) (X(a_{\lambda})-X(a_0))\| +\| (Z_0(a_{\lambda}) - Z_0(a_0)) X (a_0) \|$$ $$\leq \frac{\varepsilon}{3} +\| (Z_0(a_{\lambda}) - Z_0(a_0)) (X (a_0)- p X (a_0)) \|  +\| (Z_0(a_{\lambda}) - Z_0(a_0)) p X (a_0) \| $$ $$\leq \frac{\varepsilon}{3} + 2 \frac{\varepsilon}{3}+\| (Z_0(a_{\lambda}) - Z_0(a_0)) p  \|\leq \varepsilon. $$ This shows that $Z_0 X\in C(\Omega, K(H)).$ The statement for $X Z_0$ follows similarly. Then $D: C(\Omega, K(H)) \to C(\Omega, K(H))$, $D(X) = [Z_0,X] = Z_0 X -X Z_0$ is a derivation on $C(\Omega, K(H))$.
}\end{remark}

Though it is not true that every derivation on $C(\Omega,K(H))$ is inner, and hence Theorem \ref{t weak-2-local der on CKA} cannot be applied, we can extend our study to weak-2-local derivations on $C(\Omega,K(H))$.

\begin{theorem}\label{t weak-2-local der on CKH} Let $H$ be a complex Hilbert space. Then every weak-2-local derivation $\Delta:C(\Omega,K(H))\to C(\Omega,K(H))$ is a {\rm(}linear{\rm)} derivation.
\end{theorem}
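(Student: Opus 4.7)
The plan is to follow the template of Theorem \ref{t weak-2-local der on CKA} almost verbatim, with Proposition \ref{p derivations on C(K,K(H))} playing the role previously occupied by the inner-derivation hypothesis. By \cite[Theorem 3.4]{EssaPeRa14} it suffices to prove that $\Delta$ is linear, which in turn is equivalent to showing that $\delta_t\Delta$ is linear for every $t\in\Omega$. So I would fix $t\in\Omega$ and aim first to establish the pointwise reduction
\begin{equation*}
\delta_t \Delta \Gamma \delta_t (X) = \delta_t \Delta (X), \qquad X\in C(\Omega,K(H)).
\end{equation*}

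For this, I would pick an arbitrary $\phi\in K(H)^*$ and form the functional $\phi\otimes\delta_t : C(\Omega,K(H))\to\mathbb{C}$, $X\mapsto \phi(X(t))$. The weak-2-local property of $\Delta$ applied to the pair $X$ and $\Gamma(X(t))=\Gamma\delta_t(X)$ against the functional $\phi\otimes\delta_t$ produces a derivation $D$ on $C(\Omega,K(H))$ with
\begin{equation*}
\phi\left(\delta_t \Delta (X) - \delta_t \Delta \Gamma \delta_t (X)\right) = (\phi\otimes\delta_t)\bigl(D(X) - D\Gamma\delta_t(X)\bigr).
\end{equation*}
Here is where Proposition \ref{p derivations on C(K,K(H))} replaces the innerness hypothesis used in the proof of Theorem \ref{t weak-2-local der on CKA}: that proposition asserts exactly $\delta_t D\Gamma\delta_t = \delta_t D$ for every derivation $D$ on $C(\Omega,K(H))$, so the right-hand side vanishes. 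Since $\phi\in K(H)^*$ was arbitrary, the desired identity follows.

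Once this reduction is available, the remainder runs exactly as in Theorem \ref{t weak-2-local der on CKA}. Since $\delta_t : C(\Omega,K(H))\to K(H)$ is a continuous $K(H)$-module homomorphism and $\Gamma : K(H)\to C(\Omega,K(H))$ is a homomorphism compatible with the module action, Lemma \ref{l composition module hom}$(c)$ shows that $\delta_t\Delta\Gamma : K(H)\to K(H)$ is a weak-2-local derivation. Now I invoke \cite[Theorem 3.1]{CaPe2016} (equivalently, the $K(H)$ version mentioned in the introduction) to conclude that $\delta_t\Delta\Gamma$ is a linear derivation on $K(H)$. Combined with the reduction above, for all $X,Y\in C(\Omega,K(H))$ and all $t\in\Omega$,
\begin{equation*}
\delta_t\Delta(X+Y) = \delta_t\Delta\Gamma\delta_t(X+Y) = \delta_t\Delta\Gamma\delta_t(X)+\delta_t\Delta\Gamma\delta_t(Y) = \delta_t\Delta(X)+\delta_t\Delta(Y),
\end{equation*}
and similarly for scalar multiples, establishing linearity of $\Delta$.

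There is essentially no remaining obstacle: all the hard work is absorbed by Proposition \ref{p derivations on C(K,K(H))}, whose $\tau$-weak$^*$-continuous representative $Z_0$ suffices precisely because the proof only ever needs the pointwise equality $\delta_t D\Gamma\delta_t=\delta_t D$, not norm-continuity of $Z_0$ (this is reassuring in view of Remark \ref{new remark}). Should one want to avoid citing \cite[Theorem 3.4]{EssaPeRa14}, one could instead verify directly that a linear weak-2-local derivation is a derivation, but the cleaner route is the one sketched above.
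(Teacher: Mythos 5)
Your proposal is correct and follows essentially the same route as the paper: establish $\delta_t\Delta\Gamma\delta_t=\delta_t\Delta$ via the weak-2-local property tested against $\phi\otimes\delta_t$ together with Proposition \ref{p derivations on C(K,K(H))}, then use Lemma \ref{l composition module hom}$(c)$ and the $K(H)$ result from \cite{CaPe2016} to get that $\delta_t\Delta\Gamma$ is a linear derivation, and conclude as in Theorem \ref{t weak-2-local der on CKA}. The only cosmetic point is that the relevant reference for weak-2-local derivations on $K(H)$ is \cite[Theorem 3.2]{CaPe2016} rather than Theorem 3.1 (which is the $B(H)$ case), but you already flag that you need the $K(H)$ version.
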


\begin{proof} Let us fix $t$ in $\Omega$ and $\phi \in K(H)^*$. For the functional $\phi\otimes \delta_{t}\in C(\Omega,K(H))^*$, $A$, $\Gamma \delta_t (A)$ in $C(\Omega,K(H))$, there exists a derivation $D=D_{A,\Gamma \delta_t (A), \phi\otimes \delta_{t}}$ on $C(\Omega,K(H))$ such that $(\phi\otimes \delta_{t}) \Delta(A) = \phi\otimes \delta_{t} D(A)$ and $(\phi\otimes \delta_{t}) \Delta \Gamma \delta (A) = (\phi\otimes \delta_{t}) D \Gamma \delta_t (A)$. Since, by Proposition \ref{p derivations on C(K,K(H))}, $\delta_t D \Gamma \delta_t (A)= \delta_t D (A),$ we deduce that \begin{equation}\label{eq new deltatDeltaGamma deltat} \delta_t \Delta \Gamma \delta_t = \delta_t \Delta.\end{equation}

Arguing as in the proof of Theorem \ref{t weak-2-local der on CKA}, we deduce that $\delta_t \Delta \Gamma : K(H) \to K(H)$ is a weak-2-local derivation. Theorem 3.2 in \cite{CaPe2016} assures that  $\delta_t \Delta \Gamma$ is a linear derivation. Applying the identity in \eqref{eq new deltatDeltaGamma deltat}, and following the same arguments given at the end of the proof of Theorem \ref{t weak-2-local der on CKA}, we obtain $\delta_t \Delta (X+Y)= \delta_t \Delta (X)+ \delta_t \Delta (Y)$, for every $X,Y$ in $C(\Omega,K(H)),$ which proves the first statement.\end{proof}

The machinery developed in previous results reveals a pattern which is stated in the next result, whose proof has been outlined in Theorems \ref{t weak-2-local der on CKA} and \ref{t weak-2-local der on CKH}.

\begin{theorem}\label{t pattern} Let $\Omega$ be a compact Hausdorff space. Suppose $A$ is a C$^*$-algebra satisfying the following hypothesis:\begin{enumerate}[$(a)$] \item Every weak-2-local derivation on $A$ is a {\rm(}linear{\rm)} derivation;
\item For each derivation $D: C(\Omega,A)\to C(\Omega,A)$ the identity $\delta_t D \Gamma \delta_t (A)= \delta_t D (A),$ holds for every $t\in \Omega$, $A\in C(\Omega,A)$.
\end{enumerate} Then every weak-2-local derivation on $C(\Omega,A)$ is a {\rm(}linear{\rm)} derivation.$\hfill\Box$
\end{theorem}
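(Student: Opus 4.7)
The plan is to follow almost verbatim the scheme that proves Theorem \ref{t weak-2-local der on CKA} and Theorem \ref{t weak-2-local der on CKH}, using hypothesis $(b)$ in place of the concrete inner-derivation statements (Theorem \ref{l C(KvN) has the inner derivation property or rigidity property} and Proposition \ref{p derivations on C(K,K(H))}) that were used there. First, by \cite[Theorem 3.4]{EssaPeRa14}, it suffices to prove that $\Delta : C(\Omega,A)\to C(\Omega,A)$ is linear, and linearity of $\Delta$ is equivalent to linearity of $\delta_t \Delta$ for every $t\in \Omega$. So the goal reduces to showing additivity of $\delta_t \Delta$ for each fixed $t$.

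Fix $t\in \Omega$. The key step, and the one that carries all the substance, is to establish
\begin{equation*}
\delta_t \Delta \Gamma \delta_t (X) \;=\; \delta_t \Delta (X), \qquad \text{for every } X\in C(\Omega,A).
\end{equation*}
For this, given $X\in C(\Omega,A)$ and $\phi\in A^*$, I would test against the functional $\phi\otimes \delta_t : C(\Omega,A)\to \mathbb{C}$, $Y\mapsto \phi(Y(t))$, and use the weak-2-local hypothesis on $\Delta$ with the two elements $X$ and $\Gamma \delta_t(X)$ and the functional $\phi\otimes \delta_t$. This produces a derivation $D$ on $C(\Omega,A)$ such that $\phi\otimes \delta_t$ agrees on $\Delta$ and on $D$ at both of these points. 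By hypothesis $(b)$, $\delta_t D \Gamma\delta_t = \delta_t D$, so $\phi\otimes \delta_t (D(X) - D\Gamma\delta_t(X)) = 0$, and hence $\phi(\delta_t\Delta(X) - \delta_t\Delta\Gamma\delta_t(X)) = 0$. Since $\phi\in A^*$ is arbitrary, Hahn--Banach yields the claimed identity.

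Next, since $\delta_t : C(\Omega,A)\to A$ is a continuous $A$-module homomorphism and $\Gamma : A \to C(\Omega,A)$ is a homomorphism satisfying $\Gamma(a) Y = a Y$ and $Y\Gamma(a) = Y a$ for every $a\in A$, $Y\in C(\Omega,A)$, Lemma \ref{l composition module hom}$(c)$ (applied to the weak-2-local derivation $\Delta$) shows that $\delta_t \Delta \Gamma : A \to A$ is a weak-2-local derivation. By hypothesis $(a)$ it is therefore a (linear) derivation, and in particular additive. Combining additivity of $\delta_t\Delta\Gamma$ with the identity of the previous paragraph, for any $X,Y\in C(\Omega,A)$,
\begin{equation*}
\delta_t\Delta(X+Y) = \delta_t\Delta\Gamma\delta_t(X+Y) = \delta_t\Delta\Gamma\delta_t(X) + \delta_t\Delta\Gamma\delta_t(Y) = \delta_t\Delta(X)+\delta_t\Delta(Y),
\end{equation*}
and a similar argument works for homogeneity. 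Thus $\delta_t\Delta$ is linear for every $t\in \Omega$, hence $\Delta$ is linear, and then \cite[Theorem 3.4]{EssaPeRa14} (or a direct check) promotes $\Delta$ to a derivation.

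The only delicate step is the first one, establishing $\delta_t \Delta \Gamma \delta_t = \delta_t \Delta$; once that is in hand, the rest is a formal consequence of hypotheses $(a)$--$(b)$ and Lemma \ref{l composition module hom}$(c)$. That first step, however, is essentially free here because hypothesis $(b)$ is precisely the structural property about derivations on $C(\Omega,A)$ that was obtained in the two concrete settings via Theorem \ref{l C(KvN) has the inner derivation property or rigidity property} and Proposition \ref{p derivations on C(K,K(H))}; so no further obstacle is expected, and the proof amounts to extracting the common pattern.
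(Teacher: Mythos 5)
Your proposal is correct and coincides with the paper's intended argument: the paper gives no separate proof of this theorem, stating only that it extracts the common pattern from the proofs of Theorems \ref{t weak-2-local der on CKA} and \ref{t weak-2-local der on CKH}, and your write-up reproduces exactly that scheme (reduction to linearity via \cite[Theorem 3.4]{EssaPeRa14}, the identity $\delta_t\Delta\Gamma\delta_t=\delta_t\Delta$ obtained from hypothesis $(b)$ and the weak-2-local property tested against $\phi\otimes\delta_t$, and then Lemma \ref{l composition module hom}$(c)$ together with hypothesis $(a)$ to conclude additivity of $\delta_t\Delta$).
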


Proposition \ref{p derivations on C(K,K(H))} above shows that $K(H)$ satisfies the hypothesis $(b)$ in the previous theorem. We recall that every compact C$^*$-algebra $B$ is C$^*$-isomorphic to the $c_0$-sum $(\bigoplus_{i \in I} K(H_i))_{c_0}$, where each $H_i$ is a complex Hilbert space (see \cite{Al}). Let us define a particular approximate unit in $B$. Let $\mathcal{F} (I)$ denote the finite subsets of $I$. Let $\Lambda$ be the set of all finite tuples of the form $(F_i )_{i\in J}=(F_{i_1},\ldots, F_{i_k})$, where $J=\{i_1,\ldots, i_k\}\in \mathcal{F}(I)$, and $F_{i_j}\in\mathfrak{F} (H_{{i_j}})$. We shall say that $(F_{i})_{i\in J_1}\leq (G_{i})_{i\in J_{2}}$ if $J_1\subseteq J_2$ and $F_i\subseteq G_i$ for every $i\in J_1$. We set $\displaystyle p_{(F_{i})_{i\in J}} := \sum_{i\in J} p_{_{F_i}}\in B$. The net $(\displaystyle p_{(F_{i})_{i\in J}})_{_{(F_{i})_{i\in J}\in \Lambda}}$ is an approximate unit in $B$. When in the proof of Proposition \ref{p derivations on C(K,K(H))} the approximate unit $(\widehat{p}_{_F})_{F\in \mathfrak{F} (H)}$ is replaced with $(\displaystyle p_{(F_{i})_{i\in J}})_{_{(F_{i})_{i\in J}\in \Lambda}}$, and Proposition \ref{p Stampfli on CKMn} is substituted for Remark \ref{r prop Stampfli for finite dimensional Cstaralgebras}, the arguments remain valid to prove the following:

\begin{proposition}\label{p derivations on C(K,compact)} Let $D:C(\Omega,B)\to C(\Omega,B)$ be a derivation, where $\Omega$ is a compact Hausdorff space and $B$ is a compact C$^*$-algebra. Then there exists a bounded mapping $Z_0:  \Omega\to B^{**}$ satisfying $D(X) (t) = [Z_0(t),X(t)]$, for every $X\in C(\Omega,B)$. In particular, for each $t$ in $\Omega$ we have $\delta_t D \Gamma \delta_t (A)= \delta_t D (A),$ for every $A\in C(\Omega,B)$.$\hfill\Box$
\end{proposition}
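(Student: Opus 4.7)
The plan is to transcribe the argument of Proposition \ref{p derivations on C(K,K(H))}, systematically replacing the approximate unit $(\widehat{p}_{_F})_{F\in \mathfrak{F}(H)}$ by $(\widehat{q}_{\alpha})_{\alpha\in\Lambda}$, where $\widehat{q}_\alpha = \Gamma(p_{(F_i)_{i\in J}})$ for $\alpha = (F_i)_{i\in J} \in \Lambda$, and Proposition \ref{p Stampfli on CKMn} by Remark \ref{r prop Stampfli for finite dimensional Cstaralgebras}, exactly as indicated in the paragraph preceding the statement. First I would write $D = D_1 + i D_2$ with $D_j = \frac{1}{2}(D\pm D^{\sharp})$, reducing the problem to the case of a $^*$-derivation; the resulting bounded maps into $B^{**}$ are then combined as $Z_0 = Z_{0,1} + i Z_{0,2}$.

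Assume $D$ is a $^*$-derivation and set $C = C(\Omega, B)$. For each $\alpha = (F_i)_{i\in J} \in \Lambda$ the corner $q_\alpha B q_\alpha \cong \bigoplus_{i\in J} M_{\dim F_i}$ is finite dimensional, and by \cite[Proposition 2.7]{NiPe2014} the map $X \mapsto \widehat{q}_\alpha D(\widehat{q}_\alpha X \widehat{q}_\alpha) \widehat{q}_\alpha$ is a $^*$-derivation on $\widehat{q}_\alpha C \widehat{q}_\alpha \cong C(\Omega, q_\alpha B q_\alpha)$. Theorem \ref{l C(KvN) has the inner derivation property or rigidity property} together with Remark \ref{r prop Stampfli for finite dimensional Cstaralgebras} then produces a unique $Z_\alpha \in \widehat{q}_\alpha C \widehat{q}_\alpha$ satisfying $Z_\alpha^* = -Z_\alpha$, $\sigma(-iZ_\alpha) \subseteq \mathbb{R}_0^+$, $\|Z_\alpha\| \leq \|D\|$, and $\widehat{q}_\alpha D(\widehat{q}_\alpha X \widehat{q}_\alpha)\widehat{q}_\alpha = [Z_\alpha, \widehat{q}_\alpha X \widehat{q}_\alpha]$ for every $X \in C$. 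Using that $(q_\alpha)$ is an approximate unit in $B$, the standard compactness-of-$\Omega$ argument already spelled out in the proof of Proposition \ref{p derivations on C(K,K(H))} shows that $(\widehat{q}_\alpha)$ is an approximate unit in $C$.

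For each fixed $t \in \Omega$ the norm-bounded net $(Z_\alpha(t))_\alpha \subset B \subseteq B^{**}$ admits, by Banach--Alaoglu in $B^{**}$, a weak$^*$-convergent subnet with limit $Z_0(t) \in B^{**}$ satisfying $\|Z_0(t)\| \leq \|D\|$; this defines the bounded map $Z_0: \Omega \to B^{**}$. By continuity of $D$, for each $A \in C$ the nets $\widehat{q}_\alpha A \widehat{q}_\alpha \to A$ and $\widehat{q}_\alpha D(\widehat{q}_\alpha A \widehat{q}_\alpha)\widehat{q}_\alpha \to D(A)$ converge in norm, hence so do their evaluations at $t$; passing to weak$^*$-limits along the chosen subnet in
\[
\widehat{q}_\alpha D(\widehat{q}_\alpha A \widehat{q}_\alpha)\widehat{q}_\alpha(t) = [Z_\alpha(t),\, (\widehat{q}_\alpha A \widehat{q}_\alpha)(t)],
\]
with the aid of Lemma \ref{l weakstar times norm convergent}, delivers $D(A)(t) = [Z_0(t), A(t)]$ in $B^{**}$. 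The ``in particular'' assertion is immediate, since $\Gamma\delta_t(A)$ is the constant function with value $A(t)$, so both $\delta_t D(A)$ and $\delta_t D \Gamma \delta_t(A)$ equal $[Z_0(t), A(t)]$.

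The only essential change with respect to the $K(H)$ argument is conceptual rather than calculational: because the finite-rank corners of $B$ no longer sit inside a single fixed ambient von Neumann subalgebra of $B$ itself, the pointwise limits $Z_0(t)$ are forced to live in the enveloping von Neumann algebra $B^{**}$, and accordingly no $\tau$-weak$^*$-continuity analogue of Proposition \ref{p derivations on C(K,K(H))} is formulated (nor needed for the intended application to Theorem \ref{t pattern}). The main obstacle I anticipate is precisely this bookkeeping with subnets that may depend on $t$, together with the verification that $(\widehat{q}_\alpha)$ behaves as an approximate unit in $C$; both are direct transcriptions of arguments already displayed in the preceding proofs.
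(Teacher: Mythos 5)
Your proposal is correct and follows exactly the route the paper intends: the paper gives no separate proof of this proposition but states, in the paragraph preceding it, that the argument of Proposition \ref{p derivations on C(K,K(H))} goes through verbatim once the approximate unit $(\widehat{p}_{_F})_{F\in \mathfrak{F}(H)}$ is replaced by $(p_{(F_i)_{i\in J}})_{(F_i)_{i\in J}\in\Lambda}$ and Proposition \ref{p Stampfli on CKMn} by Remark \ref{r prop Stampfli for finite dimensional Cstaralgebras}, which is precisely the transcription you carry out. Your added observations --- that the limits must be taken in $B^{**}$ and that no $\tau$-weak$^*$-continuity claim is made or needed --- accurately reflect the weaker conclusion stated in the proposition.
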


Finally, combining Proposition \ref{p derivations on C(K,compact)} with  Proposition 3.4 in \cite{CaPe2016} and Theorem \ref{t pattern} we culminate the study of weak-2-local derivation on the C$^*$-algebra of all continuous functions on a compact Hausdorff space with values on a compact C$^*$-algebra.

\begin{theorem}\label{t weak-2-local der on compact Cstar} Let $B$ be a compact C$^*$-algebra, and let $\Omega$ be a compact Hausdorff space. Then every weak-2-local derivation $\Delta:C(\Omega,B)\to C(\Omega,B)$ is a {\rm(}linear{\rm)} derivation. In particular, every 2-local derivation on $C(\Omega,B)$ is a {\rm(}linear{\rm)} derivation. $\hfill\Box$
\end{theorem}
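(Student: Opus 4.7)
The plan is to obtain the result as an essentially immediate application of the pattern Theorem \ref{t pattern} with $A = B$, so the only task is to verify its two hypotheses for an arbitrary compact C$^*$-algebra $B$ and compact Hausdorff space $\Omega$.

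For hypothesis $(a)$ of Theorem \ref{t pattern}, I would appeal to \cite[Proposition 3.4]{CaPe2016}, which has already been invoked in the excerpt, to conclude that every weak-2-local derivation on $B$ is a linear derivation. For hypothesis $(b)$, I would use Proposition \ref{p derivations on C(K,compact)}: given any derivation $D : C(\Omega,B) \to C(\Omega,B)$, that proposition supplies a bounded map $Z_0 : \Omega \to B^{**}$ with $D(X)(t) = [Z_0(t),X(t)]$ for all $X \in C(\Omega,B)$ and all $t \in \Omega$. Since $\Gamma \delta_t(X)$ is the constant function taking value $X(t)$, one has
\[
\delta_t\, D\, \Gamma\, \delta_t(X) \;=\; [Z_0(t),\,X(t)] \;=\; \delta_t D(X),
\]
which is exactly hypothesis $(b)$. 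Invoking Theorem \ref{t pattern} then yields that every weak-2-local derivation on $C(\Omega,B)$ is a linear derivation (linearity together with the weak-2-local property forces the Leibniz rule by \cite[Theorem 3.4]{EssaPeRa14}, as has been used before in the excerpt).

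The concluding \emph{``in particular''} assertion about 2-local derivations is immediate: any 2-local derivation $\Delta$ is automatically a weak-2-local derivation, since for each triple $(X,Y,\phi)$ the derivation $T_{X,Y}$ witnessing $\Delta(X) = T_{X,Y}(X),\ \Delta(Y) = T_{X,Y}(Y)$ also witnesses the weak identity $\phi\Delta(X) = \phi T_{X,Y}(X),\ \phi\Delta(Y) = \phi T_{X,Y}(Y)$. Hence the result for 2-local derivations on $C(\Omega,B)$ follows from the weak-2-local case just established.

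The main conceptual obstacle is not in the present theorem but was already overcome upstream, in Proposition \ref{p derivations on C(K,compact)}, whose proof rests on the Kadison--Lance--Ringrose-style norm control for $^*$-derivations (Proposition \ref{p Stampfli on CKMn} and Remark \ref{r prop Stampfli for finite dimensional Cstaralgebras}) together with the compactness of the elements of $B$ needed to build an approximate unit of finite-rank projections of the form $p_{(F_i)_{i\in J}}$. Once those ingredients are in place, as in the excerpt, Theorem \ref{t weak-2-local der on compact Cstar} is a routine assembly of pre-existing pieces and requires no further technical work.
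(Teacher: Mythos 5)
Your proposal is correct and coincides with the paper's own argument, which likewise obtains the theorem by feeding Proposition 3.4 of \cite{CaPe2016} (hypothesis $(a)$) and Proposition \ref{p derivations on C(K,compact)} (hypothesis $(b)$) into Theorem \ref{t pattern}, with the 2-local case following because every 2-local derivation is weak-2-local. No further comment is needed.
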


We have already commented that, by a result of Sh. Ayupov and K. Kudaybergenov every 2-local derivation on a von Neumann algebra $M$ is a derivation \cite{AyuKuday2014}, and the problem whether the same statement remains true for general C$^*$-algebras remains open. We can throw new light onto the study of 2-local derivations on C$^*$-algebras. Theorem \ref{t pattern} above admits the following corollary.

\begin{corollary}\label{c pattern} Let $\Omega$ be a compact Hausdorff space. Suppose $B$ is a C$^*$-algebra satisfying the following hypothesis:\begin{enumerate}[$(a)$] \item Every  2-local derivation on $B$ is a {\rm(}linear{\rm)} derivation;
\item For each derivation $D: C(\Omega,B)\to C(\Omega,B)$ the identity $\delta_t D \Gamma \delta_t (A)= \delta_t D (A),$ holds for every $t\in \Omega$, $A\in C(\Omega,B)$.
\end{enumerate} Then every 2-local derivation on $C(\Omega,B)$ is a {\rm(}linear{\rm)} derivation.$\hfill\Box$
\end{corollary}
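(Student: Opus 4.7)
The plan is to mirror the proof of Theorem \ref{t pattern}, replacing weak-2-local arguments by their 2-local counterparts; the key simplification is that the 2-local hypothesis produces genuine equalities in $C(\Omega,B)$, so one need not test against functionals of the form $\phi\otimes\delta_t$.

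First I would reduce the problem to proving that $\Delta$ is linear. Once linearity is established, the 2-local condition applied with $a=b$ turns $\Delta$ into a linear local derivation on the C$^*$-algebra $C(\Omega,B)$, which is a derivation by \cite[Theorem 3.4]{EssaPeRa14} (applicable since every 2-local derivation is \emph{a fortiori} a weak-2-local derivation), or alternatively by classical theorems on local derivations on C$^*$-algebras.

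For the linearity step, fix $t\in\Omega$ and $X\in C(\Omega,B)$. Applying the 2-local hypothesis to the pair $(X,\Gamma\delta_t(X))$ produces a derivation $D:C(\Omega,B)\to C(\Omega,B)$ with $\Delta(X)=D(X)$ and $\Delta(\Gamma\delta_t(X))=D(\Gamma\delta_t(X))$. Hypothesis $(b)$ gives $\delta_t D=\delta_t D \Gamma\delta_t$, hence the key identity
\[
\delta_t \Delta(X)\;=\;\delta_t D(X)\;=\;\delta_t D \Gamma\delta_t(X)\;=\;\delta_t \Delta \Gamma \delta_t(X)
\]
already employed in Theorems \ref{t weak-2-local der on CKA} and \ref{t weak-2-local der on CKH}. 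Exactly as in the proof of Theorem \ref{t weak-2-local der on CKA}, the 2-local condition on $\Delta$ applied to the pair $(\Gamma(a),\Gamma(b))$ for arbitrary $a,b\in B$, combined with Lemma \ref{l composition module hom}$(c)$ and its immediate 2-local analog, shows that $\delta_t\Delta\Gamma:B\to B$ is a 2-local derivation on $B$; hypothesis $(a)$ then upgrades $\delta_t\Delta\Gamma$ to a linear derivation on $B$. Combining these two ingredients I would obtain
\[
\delta_t\Delta(X+Y)\;=\;\delta_t\Delta\Gamma\delta_t(X+Y)\;=\;\delta_t\Delta\Gamma(\delta_t X)+\delta_t\Delta\Gamma(\delta_t Y)\;=\;\delta_t\Delta(X)+\delta_t\Delta(Y),
\]
and analogously for scalar multiples. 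Since the point-evaluations $\{\delta_t\}_{t\in\Omega}$ separate points of $C(\Omega,B)$, this yields linearity of $\Delta$.

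The only delicate point I anticipate is the opening reduction: confirming that a linear 2-local derivation on the (typically infinite-dimensional and non-simple) C$^*$-algebra $C(\Omega,B)$ is automatically a derivation. If \cite[Theorem 3.4]{EssaPeRa14} does not explicitly cover the 2-local case, the fallback is to note that linearity plus the 2-local condition exhibits $\Delta$ as a local derivation on a C$^*$-algebra, to which B.E. Johnson's theorem \cite{John01} applies once automatic continuity has been established. The rest of the argument is a verbatim transcription of the pattern developed in Theorems \ref{t weak-2-local der on CKA}, \ref{t weak-2-local der on CKH} and \ref{t pattern}.
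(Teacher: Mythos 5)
Your proposal is correct and follows essentially the same route the paper intends for this corollary, namely re-running the pattern of Theorems \ref{t weak-2-local der on CKA}, \ref{t weak-2-local der on CKH} and \ref{t pattern} with the 2-local hypothesis: reduce to linearity via \cite[Theorem 3.4]{EssaPeRa14} (applicable since a 2-local derivation is in particular weak-2-local), derive $\delta_t\Delta=\delta_t\Delta\Gamma\delta_t$ from hypothesis $(b)$, use Lemma \ref{l composition module hom} to see that $\delta_t\Delta\Gamma$ is a 2-local derivation on $B$, invoke $(a)$, and conclude additivity at each point evaluation. Your observation that the 2-local hypothesis yields the key identity directly, without testing against functionals $\phi\otimes\delta_t$, is an accurate (if minor) simplification of the weak-2-local argument.
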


Let $M$ be a von Neumann algebra and let $\Omega$ be a compact Hausdorff space. The previously mentioned theorem of Ayupov and Kudaybergenov assures that $M$ satisfies hypothesis $(a)$ in the above corollary. Theorem \ref{l C(KvN) has the inner derivation property or rigidity property} implies that $C(\Omega,M)$ also satisfies hypothesis $(b)$, we therefore obtain the following strengthened version of Corollary \ref{c Ayupov Arzikulov}.

\begin{corollary} Let $M$ be a von Neumann algebra and let $\Omega$ be a compact Hausdorff space. Then every 2-local derivation on $C(\Omega,M)$ is a {\rm(}linear{\rm)} derivation.$\hfill\Box$
\end{corollary}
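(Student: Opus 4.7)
The plan is to invoke Corollary \ref{c pattern} with $B = M$, so everything reduces to verifying its two hypotheses. This is the natural approach because the corollary was essentially tailored to be applied in situations where (a) a 2-local derivation theorem is available on the coefficient algebra and (b) the derivations on $C(\Omega, \cdot)$ factor through point evaluation in the appropriate way.

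For hypothesis (a), I would cite directly the Ayupov--Kudaybergenov theorem (\cite{AyuKuday2014}) which already tells us that every 2-local derivation on an arbitrary von Neumann algebra $M$ is a linear derivation. No further work is needed here.

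For hypothesis (b), I would use Theorem \ref{l C(KvN) has the inner derivation property or rigidity property} of Akemann--Johnson: every derivation $D : C(\Omega, M) \to C(\Omega, M)$ is inner. Thus there exists $Z \in C(\Omega, M)$ such that $D(X) = [Z, X]$ for every $X \in C(\Omega, M)$. Fix $t \in \Omega$ and $A \in C(\Omega, M)$. On the one hand, $\delta_t D(A) = [Z(t), A(t)]$. On the other hand, $\Gamma \delta_t (A) = \widehat{A(t)}$ is the constant function with value $A(t)$, so
\[
\delta_t D \Gamma \delta_t (A) \;=\; \delta_t \bigl([Z, \widehat{A(t)}]\bigr) \;=\; [Z(t), A(t)].
\]
Both expressions coincide, so hypothesis (b) holds. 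Having verified both conditions, Corollary \ref{c pattern} immediately yields that every 2-local derivation on $C(\Omega, M)$ is a linear derivation.

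There is no serious obstacle: the statement is essentially a plug-and-play consequence of the machinery already assembled, specifically of Corollary \ref{c pattern} combined with two known results, one from Ayupov--Kudaybergenov and one from Akemann--Johnson. The only thing worth emphasizing in the write-up is the elementary computation showing that on an inner derivation $[Z, \cdot]$, the operator $\delta_t$ kills the difference $D(A) - D(\Gamma \delta_t A)$, which is what delivers hypothesis (b) and explains why the commutator structure matters.
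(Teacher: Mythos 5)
Your proposal is correct and follows exactly the route the paper takes: apply Corollary \ref{c pattern} with $B=M$, using the Ayupov--Kudaybergenov theorem for hypothesis $(a)$ and the Akemann--Johnson innerness result (Theorem \ref{l C(KvN) has the inner derivation property or rigidity property}) for hypothesis $(b)$. The only difference is that you spell out the short computation $\delta_t D \Gamma \delta_t (A) = [Z(t),A(t)] = \delta_t D(A)$ for inner $D$, which the paper leaves implicit.
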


\end{document}